\newcommand{\TheTitle}{Optimal Reparametrisations in the Square Root Velocity Framework}
\newcommand{\TheAuthors}{Martins Bruveris}
\title{{\TheTitle}\thanks{This work has been supported by the programme ``Infinite-Dimensional Riemannian Geometry with Applications to Image Matching and Shape Analysis'' held at the
Erwin Schrödinger Institute.}}
\author{
  Martins Bruveris\thanks{Department of Mathematics, 
    Brunel University London, Uxbridge UB8 3PH, United Kingdom
    (\email{martins.bruveris@brunel.ac.uk}, 
     \url{http://www.brunel.ac.uk/~mastmmb}).}
}
\theoremstyle{plain}
\newtheorem*{thm*}{Theorem}
\theoremstyle{definition}
\newtheorem{remark}[theorem]{Remark}
\newtheorem*{openquestion*}{Open Question}
\def\be{\beta}
\def\ga{\gamma}
\def\de{\delta}
\def\ep{\varepsilon}
\def\th{\theta}
\def\la{\lambda}
\def\rh{\varrho}
\def\si{\sigma}
\def\ta{\tau}
\def\ph{\varphi}
\def\Ga{\Gamma}
\def\o{\circ}
\def\inv{^{-1}}
\def\x{\times}
\def\R{{\mathbb R}}
\def\one{\mathbbm{1}}
\let\on=\operatorname
\let\ol=\overline
\let\mb=\mathbb
\let\mc=\mathcal
\newcommand{\ud}{\,\mathrm{d}}
\def\todomartins#1{}
\newcommand{\executeiffilenewer}[3]{%
\ifnum\pdfstrcmp{\pdffilemoddate{#1}}%
{\pdffilemoddate{#2}}>0%
{\immediate\write18{#3}}\fi%
}
\newcommand{%
\executeiffilenewer{.svg}{.pdf}%
{inkscape -z -D --file=.svg %
--export-pdf=.pdf --export-latex}%
\import{figures/}{.pdf_tex}%
}[1]{%
\executeiffilenewer{#1.svg}{#1.pdf}%
{inkscape -z -D --file=#1.svg %
--export-pdf=#1.pdf --export-latex}%
\import{figures/}{#1.pdf_tex}%
}
\begin{document}

\maketitle

\begin{abstract}
The square root velocity framework is a method in shape analysis to define a distance between curves and functional data. Identifying two curves if they differ by a reparametrisation leads to the quotient space of unparametrised curves. In this paper we study analytical and topological aspects of this construction for the class of absolutely continuous curves. We show that the square root velocity transform is a homeomorphism and that the action of the reparametrisation semigroup is continuous. We also show that given two $C^1$-curves, there exist optimal reparametrisations realising the minimal distance between the unparametrised curves represented by them. Furthermore we give an example of two Lipschitz curves, for which no pair of optimal reparametrisations exists.
\end{abstract}

\begin{keywords}
Riemannian shape analysis, square-root representation, Sobolev metric, shape space, geodesic distance
\end{keywords}
\begin{AMS}
58B20, 58D15
\end{AMS}

\section{Introduction}

In this paper we want to analyse a variational problem that arises in the context of shape analysis. By \emph{shape} we mean parametrised curves of a given regularity class, with two curves identified if they differ by a translation or a reparametrisation. Denote by $B(I,\R^d)$ the \emph{shape space}, i.e., the set of all shapes, $I$ being an interval.

The goal of shape analysis is to compare, classify and identify shapes, describe the variability of a class of shapes and to quantify the information contained within a shape. The basis for these operations is provided by a distance function on shape space. There are many distance functions to choose from. A distance arising as the geodesic distance of a Riemannian metric provides additional properties to the shape space: the structure of a smooth manifold, the exponential map and minimal geodesics realising the distance can all be exploited in applications~\cite{Pennec2006b}.

Riemannian metrics on the space of curves and on the shape space of un\-pa\-ra\-metrised curves have been studied in~\cite{Michor2007,Michor2006c,Younes1998,Mio2007} as well as many later papers; an overview can be found in \cite{Bauer2014}.

A Riemannian distance that is particularly well-suited for applications is the one used in the square root velocity framework \cite{Srivastava2011}. We assign each curve $c$ its \emph{square root velocity function} (SRVF), $q = \frac{c'}{\sqrt{|c'|}}$, with the convention that $q(t) = 0$, if $c'(t) = 0$. The distance between two curves is then
\begin{equation}
\label{eq:dist_srvt}
\on{dist}(b,c) = \| p - q \|_{L^2} = \left\| \frac{b'}{\sqrt{|b'|}} - \frac{c'}{\sqrt{|c'|}} \right\|_{L^2}\,,
\end{equation}
where $p = \frac{b'}{\sqrt{|b'|}}$ is the SRVF of $b$. The natural space on which to define this distance is $AC_0(I,\R^d)$, the space of absolutely continuous curves with $c(0) = 0$. We will discuss in Sect.~\ref{sec:srvt_geometry} how this distance relates to a Riemannian metric.

Equation~\eqref{eq:dist_srvt} defines a distance on the space of parametrised curves and on shape space we consider the corresponding quotient distance,
\begin{equation}
\label{eq:dist_quot}
\on{dist}([b], [c]) = \inf_{\be, \ga \in \ol\Ga} \on{dist}(b \o \be, c \o \ga)\,;
\end{equation}
here $\ol\Ga$ is the semigroup of weakly increasing, surjective, absolutely continuous maps $\be, \ga: I \to I$ and we identify shape space with the quotient $B(I,\R^d) = AC_0(I,\R^d)/\ol\Ga$.\footnote{Since $\ol\Ga$ is a semigroup and not a group, this is not entirely correct. In fact $B(I,\R^d)$ consists of closures of $\ol\Ga$-orbits. See Sect.~\ref{sec:quotient_space} for details.}

The square root velocity framework for curves traces its origin to~\cite{Srivastava2011}. It has been used to analyse the shape of plant leaves~\cite{Laga2014} and arteries~\cite{Xie2014}, to segment handwritten text~\cite{Kurtek2014}, to globally align RNA sequences~\cite{Laborde2013}, to perform statistical analysis of manual image segmentations~\cite{Kurtek2013} and to study the shape of the corpus callosum in schizophrenic patients~\cite{Joshi2013}. The framework has been generalised to manifold-valued data and it has been used to analyse migration patterns of birds~\cite{Su2014} and audio-visual speech recognition~\cite{Su2014b}.

For scalar-valued data, i.e. $d=1$, the square root velocity framework is closely related to the Fisher--Rao metric on the space of probability densities~\cite{Tucker2013, Amari2000}. The framework has been used to align chromatograms~\cite{Wallace2014}, analyse proteomics data~\cite{Tucker2014, Cheng2014} and SONAR signals~\cite{Tucker2014b}.

In all these applications the distance~\eqref{eq:dist_srvt} on the quotient space $B(I,\R^d)$ plays an important role. For some algorithms, e.g. the computation of the Karcher mean of a set of shapes~\cite{Laga2014}, one requires not only the numerical value of the distance, but also the reparametrisations $\be, \ga$ realising
\[
\on{dist}([b], [c]) =  \on{dist}(b \o \be, c \o \ga)\,.
\]
The optimal reparametrisations $\be, \ga$ then describe an alignment between the curves $b,c$ in the sense that the point $\be(t)$ on $b$ corresponds to the point $\ga(t)$ on $c$. The question, whether these optimal reparametrisations exist, is not trivial. The best result until now is that optimal reparametrisations exist, if one of the curves is piecewise linear~\cite{Lahiri2015_preprint}.

We will show the following theorem regarding the (non-)existence of optimal reparametrisations.
\begin{thm*}
Let $d \geq 1$.
\begin{enumerate}
\item
If $b,c \in C^1(I,\R^d)$, then there exist $\be, \ga \in \ol\Ga$ realising the infimum in~\eqref{eq:dist_quot}.
\item
If $d \geq 2$, there exists a pair of Lipschitz curves for which the infimum in~\eqref{eq:dist_quot} is not realised by any pair of reparametrisations.
\end{enumerate}
\end{thm*}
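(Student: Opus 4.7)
My plan for Part~1 is the direct method of the calculus of variations. Take a minimising sequence $(\be_n, \ga_n) \in \ol\Ga \x \ol\Ga$. Using the invariance of~\eqref{eq:dist_srvt} under a common reparametrisation $(\be_n, \ga_n) \mapsto (\be_n \o \phi_n, \ga_n \o \phi_n)$, I would first replace the sequence by one that is uniformly Lipschitz: choose $\phi_n$ so that $(\be_n \o \phi_n)' + (\ga_n \o \phi_n)' + 1$ is constant, which bounds both derivatives by $3$. Arzel\`a--Ascoli then extracts a subsequence converging uniformly to a pair $(\be, \ga)$, again monotone, surjective, and Lipschitz, so $(\be, \ga) \in \ol\Ga \x \ol\Ga$.

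The crux is to show $F(\be_n, \ga_n) \to F(\be, \ga)$ for the squared SRVF distance. Writing $P_n := (p\o\be_n)\sqrt{\be_n'}$ and $Q_n := (q\o\ga_n)\sqrt{\ga_n'}$, and using the isometry $\|P_n\|_{L^2} = \|p\|_{L^2}$, the problem reduces to the convergence of $\langle P_n, Q_n\rangle$. The hypothesis $b, c \in C^1$ makes $p, q$ continuous on $I$ (with the convention $p=0$ on $\{b'=0\}$ extending continuously since $|p|^2 = |b'|$), so $(p\o\be_n)(q\o\ga_n) \to (p\o\be)(q\o\ga)$ uniformly, and the question reduces to the limit of $\int (p\o\be)(q\o\ga) \sqrt{\be_n'\ga_n'}\,\ud s$. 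The main obstacle is that $(x,y) \mapsto \sqrt{xy}$ is concave on $\R_+^2$, and the derivatives converge only weakly in $L^2$, so a Jensen-type argument gives only one-sided control of the weak limit of $\sqrt{\be_n'\ga_n'}$; for a signed integrand this does not by itself yield convergence. I would close the gap via the change-of-variables identity
\[
\int_0^t |P_n|^2\,\ud s = \int_0^{\be_n(t)} |p|^2\,\ud u \xrightarrow{n\to\infty} \int_0^{\be(t)} |p|^2\,\ud u = \int_0^t \bigl|(p\o\be)\sqrt{\be'}\bigr|^2\,\ud s,
\]
which pins down the $L^2$-mass distribution of $|P_n|^2$ at the limit and, combined with the preserved norm $\|P_n\|_{L^2} = \|p\|_{L^2}$, is expected to force strong $L^2$-convergence $P_n \to (p\o\be)\sqrt{\be'}$ by ruling out mass loss to oscillations of $\be_n'$. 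The same argument applied to $Q_n$ then gives $F(\be_n,\ga_n) \to F(\be,\ga)$, and $(\be, \ga)$ attains the infimum. The technical heart, and the place where the $C^1$ hypothesis is crucially used, is this mass-preservation/strong-limit identification.

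For Part~2 I would exhibit an explicit Lipschitz counterexample in $\R^d$ ($d \geq 2$). The obstruction in Part~1 becomes transparent: Lipschitz curves have SRVFs $p, q$ only in $L^\infty$, so $p\o\be_n$ need not converge pointwise under uniform convergence of $\be_n$, and the identification above breaks. A natural construction is to take two piecewise-linear curves sharing a common overall structure but with SRVF jumps at slightly mismatched parameter values, arranged so that aligning the jumps optimally would require a reparametrisation with its own jump discontinuity --- lying outside $\ol\Ga$. A minimising sequence of continuous reparametrisations approaches this alignment with vanishing error, but the infimum is not attained by any $(\be, \ga) \in \ol\Ga \x \ol\Ga$. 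The condition $d \geq 2$ is used to orient the SRVF jumps in linearly independent directions, preventing the ``reordering'' that would rescue existence in the scalar case. The remaining work is to compute $F$ along the explicit sequence, verifying that it is minimising, and to establish a strict lower bound for $F$ over all candidates in $\ol\Ga \x \ol\Ga$.
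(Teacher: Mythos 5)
Your Part~1 strategy breaks at its declared ``technical heart''. The claim that convergence of the cumulative distributions $\int_0^t |P_n|^2 \ud s$ together with $\|P_n\|_{L^2}=\|p\|_{L^2}$ forces strong $L^2$-convergence $P_n \to (p\o\be)\sqrt{\be'}$ is false: take $p$ constant and $\be_n$ with $\be_n'$ alternating between $0$ and $2$ on intervals of length $\tfrac{1}{2n}$; then $\be_n \to \on{Id}$ uniformly, $\int_0^t|P_n|^2\ud s = \int_0^{\be_n(t)}|p|^2\ud u \to \int_0^t |p|^2\ud u$ and the norms match, yet $P_n=(p\o\be_n)\sqrt{\be_n'}$ converges only weakly, to $\tfrac{\sqrt2}{2}\,p \neq p\sqrt{\be'}$, so there is no strong convergence and the cross term $\langle P_n,Q_n\rangle$ need not converge to its value at the limit. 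More fundamentally, your goal of proving $F(\be_n,\ga_n)\to F(\be,\ga)$ and attaining the infimum at the uniform limit pair itself cannot succeed in general: maximising sequences may use exactly such oscillations to ``switch off'' regions where $\langle p\o\be, q\o\ga\rangle<0$, and their uniform limit (e.g.\ the identity pair, as in Step~3 of the paper's Prop.~5.4) is then \emph{not} optimal. The paper therefore proves only \emph{upper semicontinuity}, and only for the truncated nonnegative integrand $\sqrt{\xi_1\xi_2}\max(\langle p(x_1),q(x_2)\rangle,0)$ (concavity in $(\xi_1,\xi_2)$ plus the Struwe-type theorem), and then invokes a separate remodelling lemma (Lem.~5.3): on the open set where $\langle p\o\be, q\o\ga\rangle<0$ the limit pair is modified so that $\sqrt{\tilde\be'\tilde\ga'}=0$ there, producing a possibly \emph{different} pair $(\tilde\be,\tilde\ga)$ whose value dominates the limsup. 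This extra step, which your plan omits, is what actually closes the argument; your continuity claim cannot substitute for it.

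Your Part~2 construction also cannot work as described. If both curves are piecewise linear (or even if only one is, the other merely absolutely continuous), the infimum \emph{is} attained --- this is the result of Lahiri--Robinson--Klassen cited in the paper and contrasted with its counterexample; intuitively, the semigroup $\ol\Ga$ permits intervals of zero speed on either side, so finitely many mismatched SRVF jumps can always be matched exactly without leaving $\ol\Ga$. The paper's counterexample is structurally different: it takes a fat Cantor set $B\subset I$ (closed, nowhere dense, $\la(B)=\tfrac12$) and SRVFs equal to a slowly rotating unit vector $v_1(t)$ on $A=I\setminus B$ and to fixed vectors $v_2,v_3$ on $B$, all mixed inner products negative. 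The supremum of the cross term equals $\la(A)$ and is approached by reparametrisations that ``skip'' ever finer open covers of $B$ by alternating derivatives $0$ and $2$, but attaining it would force $\be=\ga=\on{Id}$, which reinstates the negative contribution on $B$ --- a contradiction. The positive measure together with empty interior of the bad set is essential; a construction based on finitely many jump mismatches of PL curves has no chance of producing non-attainment.
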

The first part of the theorem is Prop.~\ref{prop:ex_opt_reparam} and the second part is Cor.~\ref{cor:count_ex}. Before we arrive at these results, we will discuss in Sect.~\ref{sec:srvt} the continuity of the square root velocity transform $R(c) = \frac{c'}{|c'|}$ on spaces of curves of finite regularity and in Sect.~\ref{sec:semigroup} and Sect.~\ref{sec:shape_space} properties of the $\ol\Ga$-action and the topology of the orbits.

\subsection*{Notation} For the purposes of this paper we set $I=[0,1]$.

We call a $C^1$-curve $c$ \emph{regular}, if $c'(t) \neq 0$ holds for all $t \in I$. Similarly, an absolutely continuous curve $c$ is regular, if $c' \neq 0$ holds a.e.. Note that this means that a curve can be regular as an absolutely continuous curve and non-regular as a $C^1$-curve. It will be clear from the context, which notion of regularity is used.

\section{Riemannian geometry of the square root velocity framework}
\label{sec:srvt_geometry}

In this section we want to discuss how the distance~\eqref{eq:dist_srvt} is connected to a Riemannian metric on the space of smooth, regular curves.

\subsection{Smooth curves}
For now assume $d \geq 2$ and denote by
\[
\on{Imm}(I,\R^d) = \{ c \in C^\infty(I,\R^d) \,:\, c'(t) \neq 0\; \forall t \in I \}
\]
the space of immersions. On this space we define
\begin{equation}
\label{eq:our_metric}
G_c(h,k) = \int_I \langle D_s h^\perp, D_s k^\perp \rangle + 
\frac 14 \langle D_s h, D_s c \rangle \langle D_s k, D_s c \rangle \ud s\,.
\end{equation}
Here $h,k \in T_c \on{Imm}(I,\R^d)$ are elements of the tangent space; immersions form an open subset of $C^\infty(I,\R^d)$ and thus $h,k \in C^\infty(I,\R^d)$; geometrically they are vector fields along the curve. We denote by $D_s h = \frac{1}{|c'|} h'$ and $\ud s = |c'| \ud \th$ differentiation and integration with respect to arc length, $D_s c = \frac{c'}{|c'|}$ is the unit length tangent vector along $c$ and $D_s h^\perp = D_s h - \langle D_s h, D_s c \rangle D_s c$ is the projection of $D_s h$ to the subspace $\{ D_s c\}^\perp$ orthogonal to the curve. Differentiation and integration with respect to arc length are used to make $G$ invariant with respect to reparametrisations.

Thus defined, $G$ is a Riemannian metric on the space $\on{Imm}_0(I,\R^d)$ of curves starting at the origin, $c(0) = 0$; this space can be identified with the quotient of $\on{Imm}(I,\R^d)$ by the group of translations.

What distinguishes the metric $G$ defined in~\eqref{eq:our_metric} among other possible choices is the existence of the square root velocity transform,
\[
R: \operatorname{Imm}_0(I,\mathbb R^d) \rightarrow C^{\infty}(I,\mathbb R^d \setminus \{0\} )\,,\quad
c \mapsto \frac{1}{\sqrt{|c'|}} c'\;.
\]
We equip $C^\infty(I, \R^d \setminus \{0\})$ with the $L^2$-inner product, viewed as a constant (and hence flat) Riemannian metric. Geodesics in $C^\infty(I,\R^d \setminus \{0\})$ with respect to this Riemannian metric correspond to pointwise geodesics in $\R^d\setminus \{0\}$ in the following sense: a path $s \mapsto q(s,\cdot)$ is a geodesic in $C^\infty(I,\R^2\setminus \{0\})$ if and only if for all $t \in I$ the curve $s \mapsto q(s,t)$ is a geodesic in $\R^d\setminus \{0\}$.

\begin{theorem}
The following holds:
\begin{enumerate}
\item
$R$ is a diffeomorphism between $\on{Imm}_0(I,\R^d)$ and $C^\infty(I,\R^d\setminus\{0\})$;
\item
$R$ is an isometry between $(\on{Imm}_0(I,\R^d), G)$ and $(C^\infty(I, \R^d\setminus \{0\}), L^2)$.
\end{enumerate}
\end{theorem}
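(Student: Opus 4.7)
The plan is to prove (1) by writing down an explicit inverse to $R$, and then to prove (2) by computing the derivative $T_c R$ and matching the $L^2$-norm of $T_c R(h)$ to $G_c(h,h)$ via an orthogonal decomposition.

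For part (1), I would define $S: C^\infty(I,\R^d\setminus\{0\}) \to \on{Imm}_0(I,\R^d)$ by
\[
S(q)(t) = \int_0^t q(u) |q(u)| \ud u\,.
\]
A direct check shows $S(q)(0) = 0$, $S(q)'(t) = q(t)|q(t)| \neq 0$, so $S(q) \in \on{Imm}_0(I,\R^d)$; moreover $|S(q)'| = |q|^2$, hence $R(S(q)) = q|q|/|q| = q$. Conversely, if $q = R(c)$, then $q|q| = c'/\sqrt{|c'|} \cdot \sqrt{|c'|} = c'$, so $S(R(c))(t) = \int_0^t c'(u)\ud u = c(t)$. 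Smoothness of $R$ and $S$ in the sense of the convenient/Fréchet-manifold structure on these open subsets of $C^\infty$ follows because the pointwise operations $v \mapsto v/\sqrt{|v|}$ (on $\R^d\setminus\{0\}$) and $v \mapsto v|v|$ (on $\R^d$) are smooth and the differentiation/integration operators $c \mapsto c'$ and $q \mapsto \int_0^\cdot q$ are continuous linear and hence smooth.

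For part (2), I would differentiate $R$ at $c$ in the direction $h \in T_c \on{Imm}_0(I,\R^d)$:
\[
T_c R(h) = \frac{h'}{\sqrt{|c'|}} - \frac{\langle c',h'\rangle}{2|c'|^{5/2}} c'\,.
\]
Rewriting with $h' = |c'| D_s h$, $c' = |c'| D_s c$ and $\langle c',h'\rangle = |c'|^2 \langle D_s c, D_s h\rangle$ gives
\[
T_c R(h) = |c'|^{1/2} \left( D_s h - \tfrac12 \langle D_s h, D_s c\rangle D_s c \right)\,.
\]
Taking the $L^2$-norm squared, the factor $|c'|$ converts $\ud\theta$ into $\ud s$:
\[
\|T_c R(h)\|_{L^2}^2 = \int_I \left| D_s h - \tfrac12 \langle D_s h, D_s c\rangle D_s c \right|^2 \ud s\,.
\]

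The last step is the orthogonal decomposition. Splitting $D_s h = D_s h^\perp + \langle D_s h, D_s c\rangle D_s c$ and using $|D_s c|=1$, one finds
\[
D_s h - \tfrac12 \langle D_s h, D_s c\rangle D_s c = D_s h^\perp + \tfrac12 \langle D_s h, D_s c\rangle D_s c\,,
\]
and since these two summands are orthogonal, the squared norm equals $|D_s h^\perp|^2 + \tfrac14 \langle D_s h, D_s c\rangle^2$. Integrating and polarising recovers exactly $G_c(h,h)$ as defined in~\eqref{eq:our_metric}, so $R$ is an isometry. The only step that needs care is the pointwise computation of $T_c R$ together with the arc-length rewriting; once that is done, the decomposition is straightforward, and I do not anticipate a serious obstacle beyond bookkeeping.
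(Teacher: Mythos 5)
Your proposal is correct and follows essentially the same route as the paper: the same explicit inverse $q \mapsto \int_0^t q|q|\ud\tau$ for part (1), and the same derivative formula $DR(c).h = \bigl( D_s h - \tfrac12 \langle D_s h, D_s c \rangle D_s c \bigr)\sqrt{|c'|}$ for part (2), with your orthogonal decomposition and polarisation simply carrying out the verification of~\eqref{eq:defining_isometry} that the paper leaves as ``easy to check'' (citing \cite{Bauer2014c} for details).
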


Let us briefly sketch the proof, which can be found for example in \cite{Bauer2014c}. The inverse of $R$ is given by
\[
R\inv(q)(t) = \int_0^t q|q|\ud \ta\,,
\]
allowing us to verify that $R$ is a diffeomorphism. In order for $R$ to be an isometry we need the relation
\begin{equation}
\label{eq:defining_isometry}
G_c(h,k) = \langle DR(c).h, DR(c).k \rangle_{L^2}
\end{equation}
to hold. The derivative of $|c'|^{-1/2}$ is $D\left(|c'|^{-1/2}\right).h = -\frac 12 |c'|^{-1/2} \langle D_s h, D_s c\rangle$ and hence the derivative of $R$ can be expressed as
\[
DR(c).h = \left( D_s h - \frac 12 \langle D_s h, D_s c \rangle D_s c \right) \sqrt{|c'|}\,.
\]
With this formula it is easy to check that \eqref{eq:defining_isometry} holds. We also see the reason for the appearance of the factor $\frac 14$ in~\eqref{eq:our_metric}.

That $R$ is an isometry means that at least locally the geodesic distance between two curves $b,c \in \on{Imm}_0(I,\R^d)$ is given by
\begin{equation*}
\on{dist}(b,c) = \| R(b) - R(c) \|_{L^2} = \left\| \frac{b'}{\sqrt{|b'|}} - \frac{c'}{\sqrt{|c'|}} \right\|_{L^2}\,,
\end{equation*}
which is exactly the distance~\eqref{eq:dist_srvt}. The global behaviour of the distance depends on the dimension $d$ of the ambient space.

Assume $d \geq 3$. The space $C^\infty(I,\R^d\setminus\{0\})$ is not convex and since geodesics are straight lines, it is also not geodesically convex. Nevertheless we are able to smoothly perturb any path that passes through the origin in such a way that the perturbed path avoids the origin and hence the geodesic distance on all of $\on{Imm}_0(I,\R^d)$ is given by~\eqref{eq:dist_srvt}.

\subsection{Plane curves}
The perturbation argument does not work for plane curves, i.e., for $d=2$. We can see in Fig.~\ref{fig:2d_geodesic} two curves, for which $\| R(b) - R(c)\|_{L^2}$ does \emph{not} represent the geodesic distance, because the straight line $(1-t)R(b) + tR(c)$ connecting them leaves $C^\infty(I,\R^2\setminus \{0\})$. What we can do however is to extend geodesics across the origin in $C^\infty(I, \R^d\setminus\{0\})$ and obtain $C^\infty(I,\R^d)$ as the \emph{geodesic completion}, i.e., a geodesically complete manifold containing $C^\infty(I, \R^d\setminus\{0\})$ as an isometric, totally geodesic submanifold. This allows us to interpret~\eqref{eq:dist_srvt} as the geodesic distance on the geodesic completion.

\begin{figure}
\centering
\def\svgwidth{.40\columnwidth}
\import{figures/}{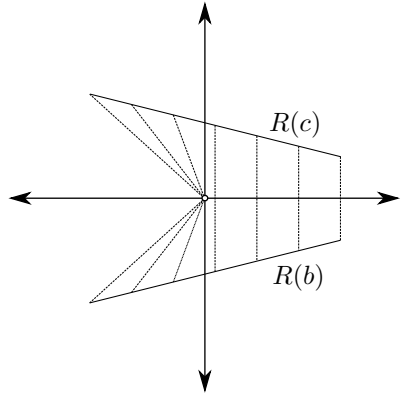}
\caption{When $R(b)$ and $R(c)$ are as above, then the straight line $(1-t)R(b) + tR(c)$ between them passes through the origin. To realize the geodesic distance in $C^\infty(I,\R^2\setminus \{0\})$ one requires a path similar to the one shown in the figure. While the path itself leaves $C^\infty(I,\R^2\setminus \{0\})$, it can be approximated by paths, that avoid the origin and thus remain inside $C^\infty(I,\R^2\setminus \{0\})$.
}
\label{fig:2d_geodesic}
\end{figure}

We have to be careful with this interpretation. It is easy to extend geodesics in the space of SRVFs, that is on the image side of $R$. We can also extend $R\inv$, given by
\[
R\inv(q)(t) = \int_0^t q |q| \ud \ta\,,
\]
from $C^\infty(I, \R^d\setminus\{0\})$ to $C^\infty(I,\R^d)$ and we have $R\inv(C^\infty(I,\R^d)) = C^\infty_0(I,\R^d)$ with $C^\infty_0(I,\R^d) = \{ c \in C^\infty \,:\, c(0) = 0 \}$. However the extended map is not a diffeomorphism any more: if a function $q$ passes through the origin, then $DR\inv(q)$ is not surjective. Thus the geodesic completion of $\on{Imm}_0(I,\R^d)$ is $C^\infty_0(I,\R^d)$ as a set, but with the differential structure, that is induced by $R\inv$.

\subsection{Scalar functions}

We can perform the same construction when $d=1$. For functions $c: I \to \R$, the interval $I$ often parametrises time and $c$ itself represents functional, i.e. time-dependent, data. Because of the importance of functional data in applications we want to describe the above construction in this particular case.

After removing the origin, $\R$ becomes disconnected and thus the space of regular curves becomes
\[
\on{Imm}_0(I,\R) = \{ c \in C^\infty_0(I,\R) \,:\, c' > 0 \text{ or } c' < 0 \}
\]
the set of strictly increasing and decreasing functions. For the Riemannian metric we have $D_s h^\perp = 0$ and so
\[
G_c(h,k) = \frac 14 \int_I h'k'\, |c'|\inv \ud \th\,.
\]
The square root velocity transform is simply
\[
R(c) = \sqrt{|c'|}\,.
\]
Each connected component of $\on{Imm}_0(I,\R)$ is convex and thus the geodesic distance on each one is given by
\[
\on{dist}(b,c) = \left\| \sqrt{|b'|} - \sqrt{|c'|} \right\|_{L^2}\,.
\]
The space of SRVFs is $C^\infty(I,\R\setminus \{0\})$, which is disconnected as well and we can consider $C^\infty(I,\R)$ as its geodesic completion. Note that this choice is not unique: we could also take two copies of $C^\infty(I,\R)$---one for positive and one for negative SRVFs---as the geodesic completion. However we choose to glue the two connected components together. For the curves itself this means that the set $C^\infty_0(I,\R)$ is the geodesic completion of $\on{Imm}_0(I,\R)$ and~\eqref{eq:dist_srvt} is the geodesic distance on $C^\infty_0(I,\R)$. As for plane curves we make no statement about the differentiable structure of the geodesic completion.

\subsection{Metric completion}

The metric completion of $C^\infty(I,\R^d)$ with respect to the $L^2$-distance is $L^2(I,\R^d)$. To see what class of curves this corresponds to, we look at the formula for $R\inv$,
\[
R\inv(q)(t) = \int_0^t q |q| \ud \ta\,,
\]
and we see that if $q \in L^2$, then $R\inv(q)$ is an absolutely continuous curve. In fact we can extend $R$ to a bijective map
\[
R : AC_0(I,\R^d) \to L^2(I,\R^d)\,,
\]
where $AC_0(I,\R^d)$ is the set of absolutely continuous curves $c:I\to \R^d$ with $c(0) = 0$. We will show in Sect.~\ref{sec:srvt} that this map is a homeomorphism, but not differentiable.

Since for $d \geq 3$ the distance~\eqref{eq:dist_srvt} is the geodesic distance on $\on{Imm}_0(I,\R^d)$, it follows that $(AC_0(I,\R^d), \on{dist})$ is the metric completion of the Riemannian manifold $(\on{Imm}_0(I,\R^d), G)$ with $G$ given by~\eqref{eq:our_metric}. Similarly, for $d=1,2$ the space $AC_0(I,\R^d)$ is the metric completion of the geodesic completion of $(\on{Imm}_0(I,\R^d), G)$.

\subsection{Higher order metrics}
The metric $G$ belongs to the family of Sobolev type metrics. Since it involves first order derivatives of the tangent vectors, it is a first order metric. A more general Sobolev type metric of order $n$ is one of the form
\[
G^n_c(h,k) = \int_I a_0 \langle h, k \rangle + a_1 \langle D_s h, D_s k \rangle + \dots
+ \langle D_s^n h, D_s^n k \rangle \ud s\,,
\]
with constants $a_j$. For closed curves first order metrics have been studied in \cite{Michor2008a, Mio2007, Sundaramoorthi2011, Bauer2014c} and higher order metrics in \cite{Michor2007,Mennucci2008,Bruveris2014,Bruveris2015}.

It is instructive to compare the behaviour of $G$, which is a first order metric, to higher order Sobolev metrics as well as to the $L^2$-metric. We will talk about closed curves here, since most references only treat closed curves. For the $L^2$-metric the picture is simple: the geodesic distance on $\on{Imm}(S^1,\R^d)$, induced by the reparametrisation invariant $L^2$-metric is identically zero \cite{Michor2005,Bauer2012c} and hence there is no completion worth speaking of.

For Sobolev type metrics of order $n \geq 2$ the completion of the space of smooth, regular curves is $\{ c \in H^n \,:\, c(t) \neq 0\}$ the space of regular curves of Sobolev order $n$. Two differences jump out: for the first-order metric the completion leaves the class of $L^2$-based Sobolev spaces and the completion contains non-regular curves, e.g. the constant curve. A more detailed comparison of different Sobolev metrics can be found in \cite{Bauer2015_preprint,Bauer2014}.

\section{Extending the square root velocity transform}
\label{sec:srvt}

We want to extend the square root velocity transform to spaces larger than the space of smooth, regular curves.

\subsection{Lipschitz curves}

First we note that the pointwise map $x \mapsto \frac{x}{\sqrt{|x|}}$ is continuous on $\R^d$ and H\"older continuous with exponent $\frac 12$. The proof of this lemma is standard.

\begin{lemma}
\label{lem:V_holder}
The map $V : \R^d \to \R^d$ defined by $V(x) = \frac{x}{\sqrt{|x|}}$ and $V(0) = 0$ is continuous and H\"older continuous with exponent $\frac 12$.
\end{lemma}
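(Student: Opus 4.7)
The plan is to handle continuity and Hölder continuity separately, since continuity on $\R^d \setminus \{0\}$ is trivial and the only subtlety is behaviour near the origin.

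\emph{Continuity.} On $\R^d \setminus \{0\}$, $V$ is a composition of the smooth maps $x \mapsto x$, $x \mapsto |x|^{1/2}$ and division, so continuity there is immediate. At the origin, I would simply note that $|V(x)| = |x|/\sqrt{|x|} = \sqrt{|x|} \to 0$ as $x \to 0$, matching $V(0) = 0$.

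\emph{Hölder continuity.} I want to show that there is a constant $C$ with $|V(x) - V(y)| \leq C\,|x-y|^{1/2}$ for all $x,y \in \R^d$. I would assume without loss of generality that $|x| \geq |y|$. If $y = 0$, then $|V(x) - V(y)| = \sqrt{|x|} = |x-y|^{1/2}$, so the estimate holds with constant $1$. If $y \neq 0$ (and hence $x \neq 0$ as well), I would use the splitting
\[
V(x) - V(y) = \frac{x-y}{\sqrt{|x|}} + y\left(\frac{1}{\sqrt{|x|}} - \frac{1}{\sqrt{|y|}}\right)
\]
and estimate the two pieces. The second term can be rewritten as
\[
y\cdot\frac{\sqrt{|y|}-\sqrt{|x|}}{\sqrt{|x|\,|y|}},
\]
whose norm is at most $\sqrt{|y|/|x|}\,\bigl|\sqrt{|x|} - \sqrt{|y|}\bigr| \leq \bigl|\sqrt{|x|} - \sqrt{|y|}\bigr|$, using $|y| \leq |x|$. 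Combined with the elementary inequality $\bigl|\sqrt{|x|} - \sqrt{|y|}\bigr| \leq \sqrt{\bigl||x|-|y|\bigr|} \leq |x-y|^{1/2}$ (Hölder continuity of the scalar square root), this piece is bounded by $|x-y|^{1/2}$. For the first term, I would use $|x-y| \leq |x|+|y| \leq 2|x|$ to get
\[
\frac{|x-y|}{\sqrt{|x|}} = |x-y|^{1/2}\cdot\sqrt{\frac{|x-y|}{|x|}} \leq \sqrt{2}\,|x-y|^{1/2}.
\]
Adding the two bounds gives $|V(x) - V(y)| \leq (1+\sqrt{2})\,|x-y|^{1/2}$.

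There is no real obstacle: the entire argument is elementary, and the only point requiring any care is to keep the estimates symmetric via the reduction $|x| \geq |y|$, which ensures the factor $|y|/\sqrt{|x||y|}$ stays bounded and prevents the apparent singularity of $1/\sqrt{|x|}$ in the first term from ruining the bound.
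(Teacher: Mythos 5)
Your proof is correct: the splitting $V(x)-V(y) = \frac{x-y}{\sqrt{|x|}} + y\bigl(\frac{1}{\sqrt{|x|}}-\frac{1}{\sqrt{|y|}}\bigr)$ under the normalisation $|x|\geq|y|$, together with the elementary inequality $\bigl|\sqrt{|x|}-\sqrt{|y|}\bigr|\leq|x-y|^{1/2}$ and the bound $|x-y|\leq 2|x|$, gives a complete and valid argument with explicit constant $1+\sqrt{2}$. The paper itself omits the proof, remarking only that it is standard, and your argument is exactly the kind of elementary estimate that remark refers to, so there is nothing to reconcile.
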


From Lem.~\ref{lem:V_holder} it immediately follows that we can extend the square root velocity transform to Lipschitz curves.

\begin{corollary}
The map $R : W^{1,\infty}(I,\R^d) \to L^{\infty}(I,\R^d)$ defined by $R(c) = V \o c'$ is continuous and H\"older continuous with exponent $\frac 12$.
\end{corollary}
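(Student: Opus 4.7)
The proof will be essentially immediate from Lemma \ref{lem:V_holder}, because Hölder continuity on $\R^d$ lifts directly to Hölder continuity under the $L^\infty$-norm. The plan is as follows.

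First, I observe that for $c \in W^{1,\infty}(I,\R^d)$ the derivative $c'$ exists almost everywhere and lies in $L^\infty(I,\R^d)$, so the composition $V \circ c'$ is a well-defined element of $L^\infty(I,\R^d)$, and $R$ is therefore well-defined as a map $W^{1,\infty}(I,\R^d) \to L^\infty(I,\R^d)$.

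Next, let $C > 0$ denote the Hölder constant from Lemma \ref{lem:V_holder}, i.e.\ $|V(x) - V(y)| \leq C |x-y|^{1/2}$ for all $x,y \in \R^d$. Given $b, c \in W^{1,\infty}(I,\R^d)$, I apply this pointwise to $b'(t)$ and $c'(t)$ for a.e.\ $t \in I$, obtaining
\[
|R(b)(t) - R(c)(t)| = |V(b'(t)) - V(c'(t))| \leq C |b'(t) - c'(t)|^{1/2}.
\]
Taking the essential supremum over $t \in I$ yields $\|R(b) - R(c)\|_{L^\infty} \leq C \|b' - c'\|_{L^\infty}^{1/2}$, and since $\|b' - c'\|_{L^\infty} \leq \|b-c\|_{W^{1,\infty}}$ (with the usual norm $\|c\|_{W^{1,\infty}} = \|c\|_{L^\infty} + \|c'\|_{L^\infty}$), Hölder continuity with exponent $\tfrac{1}{2}$ follows, and continuity is an immediate consequence.

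There is no real obstacle here: the pointwise nature of the nonlinearity $V$ and the fact that the $L^\infty$-norm is itself a pointwise essential supremum mean that no integration or limiting argument is needed beyond the pointwise inequality supplied by Lemma \ref{lem:V_holder}. The only minor point worth stating explicitly is that the estimate is insensitive to the a.e.-ambiguity in $b'$ and $c'$, so the inequality transfers cleanly to representatives in $L^\infty$.
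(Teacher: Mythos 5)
Your proof is correct and follows essentially the same route as the paper: apply the pointwise H\"older estimate of Lemma~\ref{lem:V_holder} to $b'(t)$ and $c'(t)$ and take the (essential) supremum. The extra remarks on well-definedness and on $\|b'-c'\|_{L^\infty}\leq\|b-c\|_{W^{1,\infty}}$ are fine but add nothing beyond the paper's one-line argument.
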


\begin{proof}
Take the supremum in
\[
\left|R(c_1)(t) - R(c_2)(t)\right|
= \left| V(c_1'(t)) - V(c_2'(t)) \right|
\leq C \left| c_1'(t) - c_2'(t) \right|^{1/2}\,,
\]
with $C$ being the H\"older constant of $V$.
\end{proof}

Similarly it can be shown that for all $k \geq 1$ the maps
\[
R : C^k(I,\R^d) \to C^{k-1}(I,\R^d)
\]
are continuous and H\"older continuous with exponent $\frac 12$.

\subsection{Absolutely continuous curves}

We are mostly interested in the class of absolutely continuous curves, since these form the metric completion of the Riemannian manifold of smooth, regular curves. Equip the space $AC(I,\R^d)$ of absolutely continuous curves with the norm $\| c\|_{AC} = |c(0)| + \| c' \|_{L^1}$; with this norm $(AC, \|\cdot\|_{AC})$ and $(AC_0, \|\cdot\|_{AC})$, the subspace of curves with $c(0)=0$, are Banach spaces.

We can extend the square root velocity transform to
\[
R : AC_0(I,\R^d) \to L^2(I,\R^d),\, R(c)(t) = V(c'(t))
\]
This is well-defined, since
\[
\| R(c) \|^2_{L^2} = \int_I \frac{1}{|c'|} |c'|^2 \ud t = \int_I |c'| \ud t = \| c \|_{AC}\,.
\]
Note that $R$ is not an isometry, but it preserves the norm in the sense that $\| R(c) \|^2_{L^2} = \| c \|_{AC}$. The transform is bijective and the inverse is
\[
R\inv(q)(t) = \int_0^t q |q| \ud \ta\,.
\]
Continuity of $R$ on $AC_0$ is not as easy to show as for Lipschitz curves and it is not known, whether $R$ is H\"older continuous on $AC_0$.

\begin{lemma}
\label{lem:R_cont}
The map $R$ is a homeomorphism between $AC_0(I,\R^d)$ and $L^2(I,\R^d)$.
\end{lemma}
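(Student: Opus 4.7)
The plan is to prove continuity of $R$ and of $R^{-1}$ separately; bijectivity is already in hand from the explicit inverse formula $R^{-1}(q)(t) = \int_0^t q|q|\ud\tau$.

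For $R^{-1}$, I will exploit the local Lipschitz property of the pointwise map $W(x) := x|x|$. The decomposition $W(x) - W(y) = x(|x| - |y|) + (x - y)|y|$, combined with the reverse triangle inequality, yields the pointwise bound $|W(x) - W(y)| \leq (|x| + |y|)\,|x - y|$. Since $\|R^{-1}(q_1) - R^{-1}(q_2)\|_{AC} = \|W(q_1) - W(q_2)\|_{L^1}$ by the formula for $R^{-1}$, a single application of Cauchy--Schwarz will give
\[
\|R^{-1}(q_1) - R^{-1}(q_2)\|_{AC} \leq \bigl(\|q_1\|_{L^2} + \|q_2\|_{L^2}\bigr)\|q_1 - q_2\|_{L^2},
\]
so $R^{-1}$ is locally Lipschitz and in particular continuous.

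For $R$, I will use the global Hölder-$\tfrac{1}{2}$ estimate from Lemma~\ref{lem:V_holder} in its squared form $|V(x) - V(y)|^2 \leq C^2 |x - y|$. Applied pointwise at $x = c_1'(t)$, $y = c_2'(t)$ and integrated over $I$, this will produce
\[
\|R(c_1) - R(c_2)\|_{L^2}^2 \leq C^2 \int_I |c_1'(t) - c_2'(t)| \ud t = C^2 \|c_1 - c_2\|_{AC},
\]
so $R$ is itself Hölder-$\tfrac{1}{2}$ from $AC_0$ into $L^2$. Together with the previous step, this makes $R$ a homeomorphism.

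The only possible subtlety is whether the constant in Lemma~\ref{lem:V_holder} is genuinely global across the origin, since otherwise the pointwise-to-integral step above would require controlling $c_1', c_2'$ in $L^\infty$; however, this globality is precisely what that lemma asserts, so no serious obstacle arises. Both directions thus reduce to a single pointwise inequality for the nonlinearities $V$ and $W$, followed by either Cauchy--Schwarz (for $R^{-1}$) or direct integration (for $R$).
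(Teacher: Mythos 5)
Your argument is correct, but the half that carries the real weight---continuity of $R$---follows a genuinely different route from the paper. The paper avoids any quantitative estimate on $AC_0$: from $c_n'\to c'$ in $L^1$ it passes to convergence locally in measure, extracts an a.e.\ convergent subsequence, and then combines pointwise convergence of $R(c_n)$ with convergence of the norms ($\|R(c_n)\|_{L^2}^2=\|c_n\|_{AC}$) and the theorem that a.e.\ convergence plus norm convergence implies $L^2$-convergence, closing with a subsequence-of-subsequences argument. You instead square the pointwise H\"older bound for $V$ and integrate, obtaining $\|R(c_1)-R(c_2)\|_{L^2}\le C\,\|c_1-c_2\|_{AC}^{1/2}$, which is shorter and strictly stronger (a global H\"older-$\tfrac12$ modulus rather than bare continuity). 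The subtlety you flag is exactly the right one: you need the H\"older constant of $V$ to be global on $\R^d$, not merely on bounded sets. Lemma~\ref{lem:V_holder} is stated without restriction and is used with a single constant in the Lipschitz corollary, and the global inequality is in fact true: by the homogeneity $V(\la x)=\sqrt{\la}\,V(x)$ for $\la>0$ one reduces to $|x|=1$, $|y|\le 1$, and a short computation gives the sharp bound $|V(x)-V(y)|^2\le 2\,|x-y|$ (equality for $y=-x$). Be aware, though, that your estimate then contradicts the paper's remark immediately preceding the lemma that H\"older continuity of $R$ on $AC_0$ ``is not known''; so either you are indeed proving more than the author claims, or the author reads Lemma~\ref{lem:V_holder} only locally---in either case you should spell out the global pointwise inequality above to make the step self-contained. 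Your treatment of $R\inv$ coincides with the paper's (same splitting of $q_1|q_1|-q_2|q_2|$ plus Cauchy--Schwarz), and bijectivity via the explicit inverse is available as you assume.
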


\begin{proof}
The continuity of $R\inv$ is simple. Let $q_n \to q$ in $L^2$ and set $c_n = R\inv(q_n)$. Then
\begin{align*}
\| c_n - c\|_{AC} 
&= \| c_n' - c' \|_{L^1} = \int_I \big| q_n |q_n| - q |q| \big| \ud t \\
&\leq \int_I \big| q_n |q_n| - q_n |q| \big| + \big| q_n |q| - q|q| \big| \ud t \\
&\leq \int_I |q_n| \cdot |q_n - q| + |q_n - q| \cdot |q| \ud t\,,
\end{align*}
and using Cauchy--Schwartz we obtain the convergence $c_n \to c$ in $AC_0$.

Next we show that $R$ is continuous. Let $c_n \to c$ in $AC_0$. Then $c_n' \to c'$ in $L^1$ and by \cite[Satz VI.5.4]{Elstrodt2009} we also have convergence $c_n' \to c'$ locally in measure. We will use that a sequence converges against a limit, if every subsequence has a subsequence converging against the same limit. Assume a subsequence has been chosen. Then by \cite[Satz VI.4.14]{Elstrodt2009} this subsequence of $(c_n')_{n \in \mathbb N}$ has a subsequence $(c_{n_k})_{k \in \mathbb N}$, converging $c'_{n_k} \to c'$ almost everywhere. The map $V(x) = \frac{x}{\sqrt{|x|}}$ is continuous on $\R^d$ and so with the notation $q_n = R(c_n)$ we have $q_{n_k} \to q$ a.e. as well. Now by the above calculation we also have $\|q_{n_k}\|^2_{L^2} = \|c_{n_k}\|_{L^1} \to \| c\|_{L^1} = \|q\|^2_{L^2}$ and by \cite[Korollar VI.5.5]{Elstrodt2009} convergence a.e. together with convergence of the norms implies $q_{n_k} \to q$ in $L^2$. Since this holds for every subsequence we also obtain $q_n \to q$ in $L^2$, thus showing the continuity of $R$.
\end{proof}

\subsection{Differentiability}

The square root velocity transform can be extended to a continuous map on absolutely continuous curves, but by doing so we loose differentiability properties. In particular we have the following result.

\begin{proposition}
Let $c \in AC_0(I,\R^d)$ and $q \in L^2(I,\R^d)$ and assume that $c'=0$ and $q=0$ on sets of positive measure. The map $R : AC_0(I,\R^d) \to L^2(I,\R^d)$ has the following properties:
\begin{enumerate}
\item
\label{diff_p1}
$R$ is not differentiable at $c$;
\item
\label{diff_p2}
$DR\inv(q)$ is not surjective;
\item
\label{diff_p3}
$R\inv$ is not twice differentiable at $q$.
\end{enumerate}
\end{proposition}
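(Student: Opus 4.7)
The plan exploits two facts: (i) the pointwise map $V(x)=x/\sqrt{|x|}$ is not differentiable at $x=0$ because of the fractional-power scaling $V(\epsilon x)=\sqrt\epsilon\,V(x)$ for $\epsilon>0$, and (ii) the inverse map $x\mapsto x|x|$ has a degenerate (vanishing) linearisation at $x=0$ and is only $C^{1,1}$ there, not $C^2$. All three obstructions are realised by choosing perturbations concentrated on the zero set of $c'$ or $q$.

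For part~(\ref{diff_p1}), let $E=\{t\in I:c'(t)=0\}$, of positive measure by hypothesis. Pick any nonzero $v\in\R^d$ and set $h(t)=v\int_0^t\chi_E(\tau)\ud\tau$, so $h\in AC_0$ with $h'=v\chi_E$. For $\epsilon>0$ we have $(c+\epsilon h)'=c'$ off $E$ and $(c+\epsilon h)'=\epsilon v$ on $E$, whence
\[
\|R(c+\epsilon h)-R(c)\|_{L^2}=\sqrt{\epsilon}\,|v|^{1/2}|E|^{1/2}.
\]
This is $\Theta(\sqrt\epsilon)$, not $O(\epsilon)$, so no bounded linear $L:AC_0\to L^2$ can satisfy $R(c+\epsilon h)-R(c)=\epsilon L(h)+o(\epsilon)$, and $R$ is not Gateaux (hence not Fréchet) differentiable at $c$.

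For part~(\ref{diff_p2}), using the identity $x|x|-y|y|=|x|(x-y)+(|x|-|y|)y$ with $x=q+\epsilon k$ and $y=q$, the difference quotient of the integrand in $R\inv(q+\epsilon k)-R\inv(q)$ equals $|q+\epsilon k|k+\frac{|q+\epsilon k|-|q|}{\epsilon}q$. The uniform bound $2|q||k|+|k|^2\in L^1$ justifies dominated convergence; the pointwise limit is $|q|k+\langle q,k\rangle q/|q|$ on $\{q\neq 0\}$ and identically $0$ on $\{q=0\}$ (the second summand vanishes because of the factor $q$). Hence
\[
(DR\inv(q)k)'(\tau)=\chi_{\{q\neq 0\}}\left[|q|k+\frac{\langle q,k\rangle}{|q|}q\right],
\]
which is zero a.e.\ on $\{q=0\}$, so the image of $DR\inv(q)$ is contained in the proper closed subspace of $AC_0$ of curves whose derivative vanishes a.e.\ on $\{q=0\}$, and $DR\inv(q)$ is not surjective.

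For part~(\ref{diff_p3}), pick $\ell\in L^2$ supported on $\{q=0\}$ with $\ell\neq 0$ there. On $\{q=0\}$ we have $q+\epsilon\ell=\epsilon\ell\neq 0$ for $\epsilon\neq 0$, so the formula from part~(\ref{diff_p2}) applied at $q+\epsilon\ell$ gives, on this set,
\[
(DR\inv(q+\epsilon\ell)k)'(\tau)=|\epsilon|\left[|\ell|k+\frac{\langle\ell,k\rangle}{|\ell|}\ell\right],
\]
while $(DR\inv(q)k)'(\tau)=0$ there. Dividing by $\epsilon$ produces the sign factor $\operatorname{sign}(\epsilon)$; taking $k=\ell$ yields $\operatorname{sign}(\epsilon)\cdot 2|\ell|\ell$, whose one-sided limits as $\epsilon\to 0^\pm$ are $\pm 2|\ell|\ell$ and differ on the positive-measure set $\{\ell\neq 0\}$. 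Hence the two-sided $L^1$-limit of the difference quotient of $q\mapsto DR\inv(q)$ at $q$ in direction $\ell$ fails to exist, showing $R\inv$ is not twice differentiable there. The main technical obstacle is the careful case-splitting on $\{q\neq 0\}$ vs.\ $\{q=0\}$ in parts~(\ref{diff_p2}) and~(\ref{diff_p3}); once the $|\epsilon|$-scaling on the singular set is isolated, the sign factor $|\epsilon|/\epsilon$ at second order makes the non-differentiability in~(\ref{diff_p3}) automatic.
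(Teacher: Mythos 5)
Your proposal is correct and follows essentially the same route as the paper: non-differentiability of $R$ via perturbations $h$ with $h'$ supported on $\{c'=0\}$ and the $\sqrt{\ep}$-scaling, non-surjectivity of $DR\inv(q)$ from the explicit derivative formula whose image has derivative vanishing on $\{q=0\}$, and failure of second differentiability from the $|\ep|/\ep$ sign factor produced by perturbations supported on $\{q=0\}$. The only difference is cosmetic: you justify the derivative formula for $R\inv$ by dominated convergence and phrase part~(\ref{diff_p1}) as a norm estimate rather than an unbounded difference quotient, which the paper leaves implicit.
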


\begin{proof}
Take a curve $c \in AC_0(I,\R^d)$, such that $c'=0$ on a set of positive measure and let $h \in AC_0(I,\R^d)$ be a function with $\on{supp} h' \subseteq \{ c'=0 \}$. Then $c'(t)+\ep h'(t) = c'(t)$ for all $t$ with  $c'(t) \neq 0$ and we have
\[
\frac 1\ep \left(R(c+\ep h) - R(c) \right) = \frac 1\ep \frac{\ep h'}{\sqrt{|\ep h'|}} = \ep^{-1/2} \frac{h'}{\sqrt{|h'|}}\,,
\]
and we see that $R$ is not differentiable at $c$.

For the second part we calculate the derivative $DR\inv$,
\[
DR\inv(q).h(t) = \int_0^t h|q| + \frac{q}{|q|} \langle q, h \rangle \ud \ta\,.
\]
Let $v = DR\inv(q).h$ and we see that $v'$ vanishes wherever $q$ vanishes. If $q=0$ on a set of positive measure, then $DR\inv(q)$ cannot be surjective.

With $q, h \in L^2(I,\R^d)$ and $q=0$ on a set of positive measure, choose $k \in L^2(I,\R^d)$ with $\{k \neq 0\} \subseteq \{q = 0\}$. Then we have
\begin{align*}
\frac{1}{\ep}\left(DR\inv(q + \ep k).h - DR\inv(q).h\right)(t) 
&= \frac 1\ep \int_0^t h|\ep k| + \frac{\ep k}{|\ep k|} \langle \ep k, h \rangle \ud \ta \\
&= \frac {|\ep|}{\ep} \int_0^t h|k| + \frac{k}{|k|} \langle k, h \rangle \ud \ta\,,
\end{align*}
and we see that the limit $\ep \to 0$ does not exist, since it depends on the sign of $\ep$. Thus $R\inv$ is not twice differentiable at $q$.
\end{proof}

Similar lack of differentiability properties hold for the square root velocity transform on smooth, nonregular curves: the map $R: C^\infty_0(I,\R^d) \to C^\infty(I,\R^d)$ is not differentiable at any curve, whose derivative vanishes at some point; the inverse is not twice differentiable at $q$ and $DR\inv(q)$ is not surjective, if $q$ passes through the origin.

\subsection{Geodesic distance} 
\label{sec:geod_dist}
We define a distance on $AC_0(I,\R^d)$ via
\[
\on{dist}(b, c) = \| R(b) - R(c) \|_{L^2}\,.
\]
This is an extension of the geodesic distance on $\on{Imm}_0(I,\R^d)$ as discussed in Sect.~\ref{sec:srvt_geometry}. Because of Lem.~\ref{lem:R_cont}, the topologies induced by $\|\cdot\|_{AC}$ and $\on{dist}$ on $AC_0$ coincide. By construction $R$ is an isometry between the metric spaces $(AC_0(I,\R^d), \on{dist})$ and $(L^2(I,\R^d), \|\cdot\|_{L^2})$. In particular $(AC_0(I,\R^d), \on{dist})$ is a complete metric space.

\section{Semigroup of reparametrisations}
\label{sec:semigroup}

We are dealing with $AC_0(I,\R^d)$, the space of absolutely continuous curves starting at the origin, and thus the natural group of reparametrisations is
\[
\Ga = \{ \ga : I \to I \,:\, \ga \text{ abs. cont.},\, 
\ga(0) = 0,\, \ga(1) = 1,\, 
\ga' > 0 \text{ a.e.}\}\,,
\]
the group of absolutely continuous homeomorphisms. It will be necessary to also consider the semigroup
\[
\ol\Ga = \{ \ga : I \to I \,:\, \ga \text{ abs. cont.},\, 
\ga(0) = 0,\, \ga(1) = 1,\, 
\ga' \geq 0 \text{ a.e.}\}\,,
\]
consisting of weakly, increasing absolutely continuous functions. Both $\Ga$ and $\ol\Ga$ are subsets of $AC_0(I,\R)$ and we endow them with the induced topology. With this topology $\ol \Ga$ coincides with the closure of $\Ga$ in $AC_0(I,\R)$. The semigroup $\ol \Ga$ acts on $AC_0(I,\R^d)$ from the right via $(c, \ga) \mapsto c \o \ga$ and the action is by isometries, as can be seen from
\[
\| c \o \ga \|_{AC} = \int_I |c' \o \ga| \ga' \ud t = \int_I |c'| \ud t = \| c\|_{AC}\,.
\]
Here we used a general form of the change of variables formula; see e.g. \cite[(20.5)]{Hewitt1975}. 

\begin{openquestion*}
Is $\Ga$ a topological group? The continuity of the multiplication follows from Prop.~\ref{prop:action_cont}, but the continuity of the inversion map $\ga \mapsto \ga\inv$ is not clear.
\end{openquestion*}

We will be concerned with the orbits in $AC_0$ of the $\Ga$- and $\ol\Ga$-actions, since these orbits will correspond to unparametrised curves. In this section we prepare for the study of the orbit space by showing that the $\ol\Ga$-action on $AC_0$ is continuous and by identifying the closure of $\Ga$-orbits.  Before we prove these results, we need a lemma about the continuity of the inversion on $\Ga$.

\begin{lemma}
\label{lem:inv_conv_ga}
If $\ga_n \in \ol \Ga$, $\de_n \in \Ga$ and $\ga_n - \de_n \to 0$ in $AC_0$, then $\ga_n \o \de_n\inv \to \on{Id}$ in $\ol \Ga$.
\end{lemma}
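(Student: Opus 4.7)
The plan is to exploit the right-composition isometry of $AC_0$: for any $c \in AC_0(I,\R^k)$ and $\gamma \in \ol\Ga$, we have $\| c \o \gamma \|_{AC} = \| c \|_{AC}$ by the change of variables computation displayed just before the lemma (applied here with $k=1$). The whole argument reduces to rewriting $\gamma_n \o \delta_n\inv - \on{Id}$ in a form to which this isometry applies.

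The point is that since $\delta_n \in \Ga$, the map $\delta_n$ is a genuine homeomorphism $I \to I$ (it is strictly increasing because $\delta_n' > 0$ a.e., so no plateaus), $\delta_n\inv$ is again in $\Ga$, and the pointwise identity $\delta_n \o \delta_n\inv = \on{Id}$ holds everywhere on $I$. Therefore one has the trivial pointwise identity
\[
\gamma_n \o \delta_n\inv - \on{Id} \;=\; \gamma_n \o \delta_n\inv - \delta_n \o \delta_n\inv \;=\; (\gamma_n - \delta_n) \o \delta_n\inv.
\]
Since $\gamma_n(0) = \delta_n(0) = 0$, the difference $\gamma_n - \delta_n$ lies in $AC_0(I,\R)$. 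Applying the isometry with $c := \gamma_n - \delta_n$ and $\gamma := \delta_n\inv \in \Ga \subseteq \ol\Ga$ then gives
\[
\| \gamma_n \o \delta_n\inv - \on{Id} \|_{AC} \;=\; \| (\gamma_n - \delta_n) \o \delta_n\inv \|_{AC} \;=\; \| \gamma_n - \delta_n \|_{AC},
\]
and the right-hand side tends to $0$ by hypothesis.

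Finally, one observes that $\gamma_n \o \delta_n\inv$ actually lies in $\ol\Ga$ (composition of a weakly increasing AC surjection with an AC homeomorphism preserves the endpoint conditions and weak monotonicity, and the invariance of the $AC$-norm shows AC regularity), so convergence in the $AC_0$-norm is precisely convergence in $\ol\Ga$ equipped with its induced topology. The main obstacle, modest though it is, is to notice that the hypothesis $\delta_n \in \Ga$ (as opposed to just $\ol\Ga$) is essential: it is exactly what makes the cancellation $\delta_n \o \delta_n\inv = \on{Id}$ legitimate and allows the argument to be reduced to a one-line application of the right-composition isometry. Without genuine invertibility of $\delta_n$, the rewriting above would fail and a more delicate analysis would be required.
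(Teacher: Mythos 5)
Your proof is correct and takes essentially the same route as the paper: your identity $\ga_n \o \de_n\inv - \on{Id} = (\ga_n - \de_n)\o\de_n\inv$ followed by the right-composition isometry is exactly the paper's computation $\|(\ga_n\o\de_n\inv)' - 1\|_{L^1} = \|\ga_n' - \de_n'\|_{L^1}$ via change of variables with $\de_n\inv$. Like the paper, you invoke without proof that $\de_n\inv$ is itself absolutely continuous (hence lies in $\Ga$), so the two arguments rest on the same implicit ingredient.
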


\begin{proof}
We have to show that $(\ga_n \o \de_n\inv)' \to 1$ in $L^1$. First we note that
\[
(\ga_n \o \de_n\inv)' - 1 
= \left(\ga_n' \o \de_n\inv - \frac{1}{(\de_n\inv)'}\right) (\de_n\inv)'
= \left(\ga_n' \o \de_n\inv - \de_n' \o \de_n\inv\right) (\de_n\inv)'
\]
Now we integrate this and obtain
\begin{align*}
\| (\ga_n \o \de_n\inv)' - 1 \|_{L^1}
&= \int_I \left| \ga_n' \o \de_n\inv - \de_n' \o \de_n\inv\right| (\de_n\inv)' \ud t
= \int_I \left| \ga_n' - \de_n' \right| \ud t\,.
\end{align*}
We can use the change of variables, because $\de_n\inv \in \Ga$ and from here the statement of the lemma follows.
\end{proof}

Now we can proceed with the main proposition.

\begin{proposition}
\label{prop:action_cont}
The action of $\ol \Ga$ on $AC_0(I,\R^d)$ is continuous.
\end{proposition}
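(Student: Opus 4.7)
The plan is to reduce joint continuity of the action to continuity in the reparametrisation variable via the isometry property recorded just above the proposition, and then to prove that one-variable continuity by approximating the curve by $C^1$-curves. Concretely, suppose $c_n \to c$ in $AC_0(I,\R^d)$ and $\ga_n \to \ga$ in $\ol\Ga$. Since the $\ol\Ga$-action is by isometries, $\|(c_n - c) \o \ga_n\|_{AC} = \|c_n - c\|_{AC}$, and the triangle inequality gives
\[
\|c_n \o \ga_n - c \o \ga\|_{AC} \leq \|c_n - c\|_{AC} + \|c \o \ga_n - c \o \ga\|_{AC}\,.
\]
The first term tends to zero by hypothesis, so it suffices to prove that for each fixed $c \in AC_0(I,\R^d)$, the orbit map $\ga \mapsto c \o \ga$ is continuous on $\ol\Ga$.

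First I would settle the case $c \in C^1_0(I,\R^d)$. Convergence $\ga_n \to \ga$ in $AC_0$ forces uniform convergence, since $|\ga_n(t) - \ga(t)| \leq \|\ga_n' - \ga'\|_{L^1}$. As $c'$ is uniformly continuous on $I$, this yields $c' \o \ga_n \to c' \o \ga$ uniformly. The chain rule $(c \o \ga)' = (c' \o \ga)\ga'$ together with $\|\ga'\|_{L^1} = 1$ then gives
\begin{align*}
\|c \o \ga_n - c \o \ga\|_{AC}
&= \|(c' \o \ga_n)\ga_n' - (c' \o \ga)\ga'\|_{L^1} \\
&\leq \|c'\|_\infty\,\|\ga_n' - \ga'\|_{L^1} + \|c' \o \ga_n - c' \o \ga\|_\infty\,,
\end{align*}
and both terms tend to zero.

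For general $c \in AC_0(I,\R^d)$, density of $C^1_0(I,\R^d)$ in $(AC_0, \|\cdot\|_{AC})$---which follows from density of smooth functions in $L^1$ applied to $c'$ and then integration from $0$---allows me to pick $\wt c \in C^1_0(I,\R^d)$ with $\|c - \wt c\|_{AC}$ arbitrarily small. Using the isometry property once more,
\begin{align*}
\|c \o \ga_n - c \o \ga\|_{AC}
&\leq \|(c - \wt c) \o \ga_n\|_{AC} + \|\wt c \o \ga_n - \wt c \o \ga\|_{AC} + \|(\wt c - c) \o \ga\|_{AC} \\
&= 2\|c - \wt c\|_{AC} + \|\wt c \o \ga_n - \wt c \o \ga\|_{AC}\,,
\end{align*}
and the last term goes to zero by the previous paragraph, completing the proof via a standard $3\ep$-argument.

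I do not anticipate a serious obstacle. Beyond the already-proven isometry property, the ingredients are the chain rule for the composition of an absolutely continuous function with a weakly monotone $AC$-function (standard, valid on all of $\ol\Ga$) and density of $C^1_0$ in $(AC_0, \|\cdot\|_{AC})$. The mild point worth flagging is that nothing in the argument requires the $\ga_n$ or $\ga$ to be strict homeomorphisms: the whole proof runs identically on $\ol\Ga$, which is precisely what is needed when later passing to orbit closures, where reparametrisations with flat pieces necessarily appear.
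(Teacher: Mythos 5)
Your argument is correct, and it takes a genuinely different route from the paper's. The paper also reduces joint continuity to continuity of the orbit maps via the isometry of the $\ol\Ga$-action, but it only proves continuity of $\ga \mapsto c \o \ga$ \emph{at the identity}, using piecewise linear approximants (whose derivatives are step functions, so the estimate is carried out interval by interval); to pass from continuity at $\on{Id}$ to joint continuity at an arbitrary $(c,\ga)$ it then needs density of $\Ga$ in $\ol\Ga$ together with Lem.~\ref{lem:inv_conv_ga}, i.e.\ approximating $\ga_n$ by invertible $\de_n$ and showing $\ga_n \o \de_n\inv \to \on{Id}$. You avoid this entirely: approximating $c$ by $C^1$-curves, the uniform continuity of $\wt c\,'$ together with the uniform convergence $\ga_n \to \ga$ gives continuity of $\ga \mapsto \wt c \o \ga$ at an \emph{arbitrary} $\ga \in \ol\Ga$, and the isometry of the action (established in the paper via the general change of variables formula, and applied by you to $h = c_n - c$ and $h = c - \wt c$) does the rest of the $3\ep$-argument. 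The ingredients you flag are indeed the only ones needed and are standard: the a.e.\ chain rule $(f \o \ga)' = (f' \o \ga)\,\ga'$, which for $f \in C^1$ is just the pointwise chain rule at every point where $\ga$ is differentiable (for general $f \in AC_0$ it is already implicit in the paper's isometry statement), and density of $C^1_0$ in $(AC_0, \|\cdot\|_{AC})$. So your proof is more self-contained for this proposition, requiring neither the inversion lemma nor density of $\Ga$ in $\ol\Ga$; the paper's extra machinery is not wasted, however, since Lem.~\ref{lem:inv_conv_ga} is reused later (Prop.~\ref{prop:orbit_closed}, Cor.~\ref{cor:orbit_id}, Prop.~\ref{prop:eq_rel}), whereas your argument keeps Prop.~\ref{prop:action_cont} independent of it.
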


\begin{proof}
The proof will proceed in three steps. First we consider the action of $\ol \Ga$ on a fixed, piecewise linear curve around $\on{Id} \in \ol \Ga$, then the action on a general curve and finally the joint continuity of the map $(c, \ga) \mapsto c \o \ga$.

{\bfseries Step 1:} piecewise linear curves, continuity at $\on{Id} \in \ol \Ga$. \\
Let $c \in AC_0(I,\R^d)$ be a piecewise linear curve, i.e., $c' = \sum_{j=1}^N a_j \one_{I_j}$ with $I_j = [t_{j-1},t_j]$ and $0 = t_0 < t_1 < \dots < t_N = 1$. Take a sequence $\ga_n \to \on{Id}$ in $\ol \Ga$. We need to show that $c \o \ga_n \to c$ in $AC_0$. 

Assume that $n$ is large enough, such that $\ga_n(I_j) \subseteq I_{j-1} \cup I_j \cup I_{j+1}$, in other words, $\ga_n([t_{j-1}, t_j]) \subseteq [t_{j-2}, t_{j+1}]$. Define the three sets
\[
A_{j,n}^- = \ga_n(I_j) \cap I_{j-1},\quad
A_{j,n} = \ga_n(I_j) \cap I_{j},\quad
A_{j,n}^+ = \ga_n(I_j) \cap I_{j+1}\,,
\]
which form a decomposition of each interval $I_j = A_{j,n}^- \cup A_{j,n} \cup A_{j,n}^+$. Then
\begin{align*}
\| c &- c\o\ga_n\|_{AC} = \int_I \left|c' - (c'\o\ga_n)\ga_n'\right| \ud t  \\
&\leq \sum_{j=1}^N \int_{A_{j,n}^-} |a_j - a_{j-1} \ga_n'| \ud t
+ \int_{A_{j,n}} |a_j| \cdot |1 - \ga_n'| \ud t
+ \int_{A_{j,n}^+} |a_j - a_{j+1} \ga_n'| \ud t\,.
\end{align*}
As $\ga_n \to \on{Id}$ uniformly, it follows that $\la(A_{j,n}^-) \to 0$ and $\la(A_{j,n}^+) \to 0$, where $\la$ denotes the Lebesgue measure, and hence the first and third integrals converge to 0. For the second integrals we have
\[
\int_{A_{j,n}} |a_j| \cdot |1 - \ga_n'| \ud t \leq \| c'\|_{\infty} \| \on{Id} - \ga_n \|_{}\,,
\]
and we see that $c \o \ga_n \to c$ in $AC_0$.

{\bfseries Step 2:} fixed arbitrary curve, continuity at $\on{Id} \in \ol \Ga$. \\
Let $c \in AC_0(I,\R^d)$ and take a sequence $\ga_n \to \on{Id}$ in $\ol \Ga$. Let $\ep > 0$ be given. Piecewise linear curves are dense in $AC_0$ and so we can choose a piecewise linear $v$, with $\|c - v\|_{AC} < \frac{\ep}3$. Then
\begin{align*}
\| c - c \o \ga_n \|_{AC} 
&\leq \| c - v \|_{AC} + \| v - v \o \ga_n \|_{AC} + \| v \o \ga_n - c \o \ga_n \|_{AC} \\
&\leq \frac{2\ep} 3 + \| v - v \o \ga_n \|_{AC}\,.
\end{align*}
Using that $\ol \Ga$ acts by isometries and the convergence for step functions shown in Step 1, we conclude that $c\o\ga_n \to c$ in $AC_0$.

{\bfseries Step 3:} joint continuity. \\
Now we take sequences $c_n \to c$ in $AC_0$ and $\ga_n \to \ga$ in $\ol \Ga$ and we want to show that $c_n \o \ga_n \to c \o \ga$ in $AC_0$. Since $\Ga$ is dense in $\ol \Ga$ we can find another sequence $\de_n \in \Ga$ with $\de_n - \ga_n \to 0$ in $AC_0$. Now we estimate
\begin{align*}
\| c_n \o \ga_n &- c \o \ga \|_{AC} \leq \\
&\leq \| c_n \o \ga_n - c \o \ga_n \|_{AC} + \| c \o \ga_n - c \o \de_n \|_{AC}
+ \| c \o \de_n - c \o \ga \|_{AC} \\
&\leq \| c_n - c \|_{AC} + \| c \o \ga_n \o \de_n\inv - c \|_{AC}
+ \| c - c \o \ga \o \de_n\inv \|_{AC}\,.
\end{align*}
By Lem.~\ref{lem:inv_conv_ga} we have $\ga_n \o \de_n\inv \to \on{Id}$ and $\ga \o \de_n \inv \to \on{Id}$ in $\ol \Ga$ and hence $c\o \ga_n \o \de_n\inv \to c$ and $c \o \ga \o \de_n\inv \to c$ in $AC_0$. This concludes the proof.
\end{proof}

Using the continuity of the action we can show that $\ol\Ga$-orbits of regular curves are closed.

\begin{proposition}
\label{prop:orbit_closed}
Let $c \in AC_0(I,\R^d)$ be a curve with $c' \neq 0$ a.e.. Then the orbit $c \o \ol \Ga$ is closed in $AC_0(I,\R^d)$.
\end{proposition}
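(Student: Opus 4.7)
The plan is to reduce the orbit question to a statement in the space of arc-length functions, where the action of $\ol\Ga$ can be controlled via Prop.~\ref{prop:action_cont}. Set $L = \|c\|_{AC}$ and let $\ell(s) = L\inv \int_0^s |c'(u)| \ud u$ denote the normalised arc-length function of $c$. Regularity of $c$ makes $\ell$ a strictly increasing absolutely continuous bijection of $I$ onto itself, so $\ell \in \ol\Ga$. Given a sequence $c \o \ga_n \to \tilde c$ in $AC_0$ with $\ga_n \in \ol\Ga$, I will manufacture the candidate reparametrisation $\ga$ from the arc-length functions $\ph_n := \ell \o \ga_n$ of the curves $c \o \ga_n$.

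The first step is to verify $\ph_n \to \ph$ in $\ol\Ga$, where $\ph(t) = L\inv \int_0^t |\tilde c'|\,\ud u$ is the arc-length function of $\tilde c$. The change-of-variables formula gives $\ph_n' = L\inv |(c \o \ga_n)'|$ almost everywhere, so the $L^1$-convergence $(c \o \ga_n)' \to \tilde c'$ implies $\ph_n' \to \ph'$ in $L^1$, i.e.\ $\ph_n \to \ph$ in $AC_0$. That $\ph$ lies in $\ol\Ga$ is immediate because $AC_0$-convergence preserves the $AC$-norm and hence $\ph(1) = L\inv \|\tilde c\|_{AC} = 1$.

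The crux is to establish that $\ell\inv$ itself lies in $AC_0(I,\R)$. I expect this to be the main obstacle, and it is exactly where the regularity hypothesis $c' \neq 0$ a.e. is used in an essential way. I would appeal to the Banach--Zaretsky theorem: a continuous function of bounded variation is AC iff it satisfies Luzin's $N$ condition. For the monotone continuous $\ell\inv$, Luzin $N$ is equivalent to $\ell$ mapping every set of positive measure to a set of positive measure. The standard identity $m(\ell(E)) = \int_E \ell' \ud s = L\inv \int_E |c'| \ud s$, valid for increasing AC $\ell$, combined with $|c'| > 0$ a.e., secures precisely this.

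Once $\ell\inv \in AC_0(I,\R)$ is in hand, Prop.~\ref{prop:action_cont} applied to the fixed curve $\ell\inv$ and the sequence $\ph_n \to \ph$ in $\ol\Ga$ yields $\ga_n = \ell\inv \o \ph_n \to \ell\inv \o \ph =: \ga$ in $AC_0(I,\R)$. Since $\ol\Ga$ is closed in $AC_0(I,\R)$, $\ga \in \ol\Ga$. A second application of Prop.~\ref{prop:action_cont} delivers $c \o \ga_n \to c \o \ga$ in $AC_0(I,\R^d)$, which together with $c \o \ga_n \to \tilde c$ forces $\tilde c = c \o \ga \in c \o \ol\Ga$, showing that the orbit is closed.
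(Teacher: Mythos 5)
Your argument is correct, but it takes a genuinely different route from the paper's proof. The paper first passes to a constant speed parametrisation of $c$, writes the limit as $\tilde c = b \o \ga$ with $b$ of constant speed, deduces $\ga_n \to \ga$ in $\ol\Ga$ from the $L^1$-convergence of the speeds, and then identifies $b = c$ by approximating $\ga_n$ with genuine homeomorphisms $\de_n \in \Ga$ and using Lem.~\ref{lem:inv_conv_ga} to pass the compositions with $\de_n\inv$ to the limit. You instead invert the arc-length function $\ell$ of $c$ itself: regularity makes $\ell$ strictly increasing with $\ell' > 0$ a.e., your Banach--Zaretsky/Luzin $N$ argument (the classical theorem on absolute continuity of the inverse of a strictly increasing $AC$ function with nonvanishing derivative) gives $\ell\inv \in AC_0(I,\R)$, and then $\ga_n = \ell\inv \o \ph_n \to \ell\inv \o \ph$ follows from two applications of Prop.~\ref{prop:action_cont}, with no need for Lem.~\ref{lem:inv_conv_ga} or the density of $\Ga$ in $\ol\Ga$. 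Both proofs hinge on the same analytic input, namely $|(c \o \ga_n)'| \to |\tilde c'|$ in $L^1$; what your route buys is that it makes explicit the measure-theoretic fact which the paper uses only implicitly when it says ``by choosing a constant speed parametrisation we can assume $|c'|$ is constant'' (that reduction also requires the inverse arc-length function to be absolutely continuous), whereas the paper's route trades that fact for the softer approximation argument inside $\Ga$. Two small points to tidy in a write-up: justify $\ph_n' = L\inv |(c \o \ga_n)'|$ a.e. by the change-of-variables/chain-rule identity for compositions with elements of $\ol\Ga$ (the same identity \cite[(20.5)]{Hewitt1975} that the paper invokes), and phrase ``$AC_0$-convergence preserves the $AC$-norm'' more carefully as continuity of the norm combined with the isometry $\|c \o \ga_n\|_{AC} = \|c\|_{AC} = L$, which yields $\|\tilde c\|_{AC} = L$ and hence $\ph(1) = 1$, so that $\ph \in \ol\Ga$.
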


\begin{proof}
By choosing a constant speed parametrisation we can assume that $|c'| \equiv \la_c$ is constant. Let $c_n = c \o \ga_n$ with $\ga_n \in \ol \Ga$ be a sequence in $c \o \ol \Ga$ and $c_n \to \tilde c$ in $AC_0$. Write $\tilde c = b \o \ga$ with $b$ a constant speed curve, $|b'|\equiv \la_b$ and $\ga \in \ol\Ga$. The identity $|c_n'| = |c' \o \ga_n| \cdot \ga_n' = \la_c \ga_n'$ and $|c'| = \la_b \ga'$ together with the convergence $|c_n'| \to |c'|$ in $L^1$, imply $\la_c \ga_n' \to \la_b \ga'$ in $L^1$. Since $\int_I \la_c \ga_n' = \la_c$, it follows that $\la_c = \la_b$ and hence $\ga_n \to \ga$ in $\ol \Ga$.

It remains to show that $c=b$. As $\Ga$ is dense in $\ol \Ga$, we can choose a sequence $\de_n \in \Ga$, such that $\de_n - \ga_n \to 0$ in $AC_0$. In particular this implies $\de_n \to \ga$ in $\ol \Ga$. The calculation
\[
\int_I \left| \left(c'_n \o \de_n\inv \right) (\de_n\inv)' - \left(\tilde c' \o \de_n\inv\right) (\de_n\inv)' \right| \ud t
= \int_I \left| c'_n - \tilde c' \right| \ud t \to 0
\]
shows that $c_n \o \de_n\inv - \tilde c \o \de_n\inv \to 0$ in $AC_0$. Now $c_n \o \de_n\inv = c \o \ga_n \o \de_n\inv \to c$ in $AC_0$, since $\ga_n \o \de_n\inv \to \on{Id}$ by Lem.~\ref{lem:inv_conv_ga} and using the same lemma also $\tilde c \o \de_n\inv = b \o \ga \o \de_n\inv \to b$. This implies $b=c$ and the proof is complete.
\end{proof}

The above result has an important corollary: the closure of the $\Ga$-orbit of a curve is equal to the $\ol\Ga$-orbit of a regular reparametrisation of it; consequently, if a curve is already regular, then the closure of its $\Ga$-orbit equals its $\ol\Ga$-orbit.

\begin{corollary}
\label{cor:orbit_id}
Let $c \in AC_0(I,\R^d)$.
\begin{enumerate}
\item
\label{cor_item1}
If $c = b \o \ga$ with $b' \neq 0$ a.e., then $\ol{c \o \Ga} = b \o \ol \Ga$.
\item
\label{cor_item2}
If $c'\neq 0$ a.e., then $\ol{c \o \Ga} = c \o \ol\Ga$.
\end{enumerate}
\end{corollary}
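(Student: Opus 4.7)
\medskip
\noindent\textbf{Proof plan.} Part~\ref{cor_item2} is the special case of Part~\ref{cor_item1} obtained by taking $b = c$ and $\gamma = \mathrm{Id}$, so I will focus on Part~\ref{cor_item1}. The forward inclusion $\overline{c \circ \Gamma} \subseteq b \circ \overline{\Gamma}$ is the easy half: since $\overline \Gamma$ is a semigroup, $c \circ \Gamma = b \circ \gamma \circ \Gamma \subseteq b \circ \overline \Gamma$, and Prop.~\ref{prop:orbit_closed}, applied to the regular curve $b$, guarantees that $b \circ \overline\Gamma$ is closed in $AC_0(I,\R^d)$, so the closure on the left is swallowed.

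The reverse inclusion $b \circ \overline \Gamma \subseteq \overline{c \circ \Gamma}$ is where the real work lies. The first step is a density reduction: it suffices to show $b \circ \delta \in \overline{c \circ \Gamma}$ for each $\delta \in \Gamma$. Granting this, an arbitrary $\delta \in \overline\Gamma$ can be approximated by $\delta_n \in \Gamma$ (by density of $\Gamma$ in $\overline\Gamma$), and continuity of the action (Prop.~\ref{prop:action_cont}) gives $b \circ \delta_n \to b \circ \delta$ in $AC_0$, so the limit lies in the closed set $\overline{c \circ \Gamma}$.

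The main obstacle is the proof for $\delta \in \Gamma$. The naive attempt would be to write $\delta = \gamma \circ (\gamma\inv \circ \delta)$, placing $b \circ \delta = c \circ (\gamma\inv \circ \delta)$ inside $c \circ \Gamma$ directly. This fails because $\gamma \in \overline\Gamma$ may have flat pieces and therefore admits no inverse in $\overline \Gamma$. The remedy is to approximate $\gamma$ inside the group $\Gamma$: choose $\gamma_k \in \Gamma$ with $\gamma_k \to \gamma$ in $\overline \Gamma$, set $\eta_k := \gamma_k\inv \circ \delta \in \Gamma$, and observe that
\[
c \circ \eta_k \;=\; b \circ (\gamma \circ \gamma_k\inv) \circ \delta.
\]
Lem.~\ref{lem:inv_conv_ga}, applied with the constant sequence $\gamma$ and the approximating sequence $\gamma_k \in \Gamma$, yields $\gamma \circ \gamma_k\inv \to \mathrm{Id}$ in $\overline \Gamma$.

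Two successive applications of Prop.~\ref{prop:action_cont} convert this into the desired convergence. Acting first by the fixed reparametrisation $\delta \in \Gamma$ on the sequence $\gamma \circ \gamma_k\inv \in \overline\Gamma \subseteq AC_0(I,\R)$ gives $\gamma \circ \gamma_k\inv \circ \delta \to \delta$ in $\overline\Gamma$; acting next on the fixed curve $b \in AC_0(I,\R^d)$ gives $c \circ \eta_k = b \circ (\gamma \circ \gamma_k\inv \circ \delta) \to b \circ \delta$ in $AC_0$. Hence $b \circ \delta \in \overline{c \circ \Gamma}$, which together with the reduction completes the proof.
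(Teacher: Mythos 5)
Your proof is correct and follows essentially the same route as the paper: one inclusion from the closedness of $b \o \ol\Ga$ (Prop.~\ref{prop:orbit_closed}), the other by approximating $\ga$ within $\Ga$ and combining Lem.~\ref{lem:inv_conv_ga} with the continuity of the action (Prop.~\ref{prop:action_cont}). The only cosmetic difference is that the paper treats an arbitrary $\be \in \ol\Ga$ in one step by approximating $\be$ and $\ga$ simultaneously, whereas you first handle $\de \in \Ga$ and then pass to $\ol\Ga$ by density, which is equally valid.
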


\begin{proof}
Clearly $c \o \Ga \subset b \o \ol \Ga$ and since $b \o \ol\Ga$ is closed we have $\ol{c \o \Ga} \subseteq b \o \ol\Ga$. If $b \o \be \in b \o \ol\Ga$, choose sequences $\be_n, \ga_n \in \Ga$ with $\be_n \to \be$, $\ga_n \to \ga$. Then $c \o \ga_n\inv \o \be_n = b \o (\ga \o \ga_n\inv) \o \be_n \to b \o \be$ using Lem.~\ref{lem:inv_conv_ga}. This proves~(\ref{cor_item1}) and~(\ref{cor_item2}) follows immediately.
\end{proof}

\begin{remark}
\label{rem:klassen_paper}
We can define an action of $\ol \Ga$ on $L^2(I,\R^d)$ via
\[
q * \ga = (q \o \ga) \cdot \sqrt{\ga'}\,.
\]
This is a linear, isometric action and it makes the square root velocity transform equivariant,
\[
R(c \o \ga) = R(c) * \ga\,.
\]
We can then formulate Prop.~\ref{prop:orbit_closed} and Cor.~\ref{cor:orbit_id} directly on $L^2(I,\R^d)$, the space of square root velocity functions: if $q \in L^2$ and $q \neq 0$ a.e., then $q \ast \ol\Ga$ is closed and $\ol{q \ast \Ga} = q \ast \ol\Ga$. In this formulation the statement has been proven in \cite[Thm.~3]{Lahiri2015_preprint} without using the continuity of $R$ or the continuity of the $\ol\Ga$-action.
\end{remark}

\section{Shape space of unparametrised curves}
\label{sec:shape_space}

\subsection{Equivalence up to parametrisation} 
We are interested in identifying cur\-ves up to reparametrisations. Since we are working with a \emph{semi}group of re\-pa\-ra\-met\-ri\-sa\-tions, we have to be careful, when talking about orbits of the $\Ga$-action; we will use the fact that $\ol\Ga$ contains $\Ga$ as a dense subgroup. Before we define what it means for two curves to be equivalent up to reparametrisations, first a helpful lemma.

\begin{lemma}
\label{lem:orbit_choice}
Let $b,c \in AC_0(I,\R^d)$. Then 
\[
\ol{b \o \Ga} \cap \ol{c \o \Ga} = \emptyset \text{ or } \ol{b \o \Ga} = \ol{c \o\Ga}\,.
\]
Two of these sets coincide, $\ol{b \o \Ga} = \ol{c \o\Ga}$, if and only if $b$ and $c$ have the same constant speed parametrisation.
\end{lemma}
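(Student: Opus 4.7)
The plan is to reduce everything to constant-speed parametrisations and then apply Cor.~\ref{cor:orbit_id}. For any $c \in AC_0(I,\R^d)$ set $\la_c = \|c\|_{AC}$; provided $\la_c > 0$ (the case $c=0$ being trivial) define
\[
\al_c(t) = \la_c\inv \int_0^t |c'(s)|\ud s \in \ol\Ga.
\]
Since $c$ is constant on every level set of $\al_c$, the formula $\tilde c(\al_c(t)) = c(t)$ unambiguously defines a $\tilde c \in AC_0$ with $|\tilde c'| \equiv \la_c$ a.e. I would then quickly verify uniqueness of this constant-speed representative: any other decomposition $c = \hat c \o \be$ with $|\hat c'| \equiv \mu$ forces $\mu = \la_c$ and, after integration, $\be = \al_c$, so surjectivity of $\al_c$ yields $\hat c = \tilde c$.

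Since $\tilde c' \neq 0$ a.e., Cor.~\ref{cor:orbit_id}(\ref{cor_item1}) gives $\ol{c \o \Ga} = \tilde c \o \ol\Ga$, and analogously $\ol{b \o \Ga} = \tilde b \o \ol\Ga$. The ``if'' direction of the final statement of the lemma is then immediate: $\tilde b = \tilde c$ forces the two closures to coincide.

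For the converse and the dichotomy simultaneously, I would assume the two closures meet, i.e.\ $\tilde b \o \be = \tilde c \o \ga$ for some $\be, \ga \in \ol\Ga$. Differentiating and taking absolute values, using the same change-of-variables argument as in the proof of Prop.~\ref{prop:orbit_closed} to identify $|\tilde b' \o \be|\be'$ with $\la_b \be'$ a.e.\ (and analogously on the right), gives $\la_b \be' = \la_c \ga'$ a.e. Integrating over $I$ yields $\la_b = \la_c$, hence $\be' = \ga'$ a.e.\ and therefore $\be = \ga$. Since $\be$ is surjective onto $I$, the identity $\tilde b \o \be = \tilde c \o \be$ then forces $\tilde b = \tilde c$, and the ``if'' direction gives $\ol{b \o \Ga} = \ol{c \o \Ga}$. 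Thus non-disjointness implies equality, establishing the dichotomy and the ``only if'' direction in one go.

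The only point requiring any care is existence and uniqueness of the constant-speed parametrisation within $AC_0$; everything else is bookkeeping via Cor.~\ref{cor:orbit_id}, isometry of the $\ol\Ga$-action and standard facts about AC monotone functions, so no genuinely new analytic input is needed.
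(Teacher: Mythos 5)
Your proof is correct, and its skeleton coincides with the paper's: reduce to constant-speed representatives via Cor.~\ref{cor:orbit_id}, differentiate the identity $\tilde b \o \be = \tilde c \o \ga$ to get $\la_b\be' = \la_c\ga'$, and integrate to conclude $\la_b=\la_c$ and $\be=\ga$. The genuine divergence is in the final cancellation and in the treatment of constant speed. The paper cancels $\be$ by approximating it with $\be_n \in \Ga$ and invoking Lem.~\ref{lem:inv_conv_ga} together with the continuity of the $\ol\Ga$-action (Prop.~\ref{prop:action_cont}) to pass to the limit in $b\o\be\o\be_n\inv = c\o\be\o\be_n\inv$; you instead observe that any $\be \in \ol\Ga$ is a continuous weakly increasing map with $\be(0)=0$, $\be(1)=1$ and hence surjective, so $\tilde b\o\be = \tilde c\o\be$ forces $\tilde b = \tilde c$ pointwise. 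Your route is simpler and purely elementary at that step, avoiding the topological machinery entirely, whereas the paper's route stays within the toolkit it has already built (and which it reuses elsewhere, e.g. in Prop.~\ref{prop:eq_rel}). You also construct the constant-speed representative $\tilde c$ explicitly via $\al_c(t)=\la_c\inv\int_0^t|c'|$ and prove its uniqueness, making the phrase ``same constant speed parametrisation'' in the statement self-contained, where the paper takes existence and uniqueness for granted; do spell out, as you indicate, why $|\tilde c'|\equiv\la_c$ a.e.\ (Lipschitz bound plus equality of total variations) and note that the a.e.\ identity $|\tilde b'\o\be|\,\be'=\la_b\be'$ needs the standard fact that $\be'=0$ a.e.\ on the preimage of a null set, exactly as in the paper's own use of this step.
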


Note that if $b, c$ are regular curves, then we can rephrase the lemma in terms of $\ol\Ga$-orbits,
\[
b \o \ol\Ga \cap c \o \ol\Ga = \emptyset \text{ or } b \o \ol\Ga = c \o \ol\Ga\,,
\]
and $b\o\ol\Ga = c\o\ol\Ga$ if and only if $b$ and $c$ have the same constant speed parametrisation.

\begin{proof}
Using Cor.~\ref{cor:orbit_id} it is enough to prove the lemma for regular curves $b,c$, in which case $\ol{b \o \Ga} = b \o\ol\Ga$ and the same for $\ga$. Assume that $b \o \ol\Ga$ and $ c \o \ol\Ga$ have a nonempty intersection, i.e., $b \o \be = c \o \ga$ for some $\be, \ga \in \ol\Ga$. By choosing constant speed reparametrisations we can assume that, $|b'|\equiv \la$ and $|c'|\equiv \mu$. By taking the norm of derivative we obtain $\la \be' = \mu \ga'$ and since $\int_I \la \be' = \la$, it follows that $\la = \mu$ and $\be = \ga$. Next we approximate $\be$ by a sequence $\be_n \in \Ga$, i.e., $\be_n \to \be$ in $\ol \Ga$. We have the identity
\[
b \o \be \o \be_n\inv = c \o \be \o \be_n\inv\,,
\]
and by taking the limit we obtain $b = c$. Thus two orbits either coincide or they are disjoint.
\end{proof}

In the next proposition we define and characterise equivalence classes of unparametrised curves.

\begin{proposition}
\label{prop:eq_rel}
The following are equivalent ways to define an equivalence relation on $AC_0(I,\R^d)$.
\begin{enumerate}
\item
\label{equiv1}
$b \sim c \Leftrightarrow \exists a \in AC_0,\; \exists \be, \ga \in \ol\Ga:\;
b = a \o \be \text{ and } c = a \o \ga\,.$
\item
\label{equiv2}
$b \sim c \Leftrightarrow \ol{b \o \Ga} = \ol{c \o \Ga}$.
\item
\label{equiv3}
Denote by $\mc A \subset AC_0$ the set of constant speed curves. Then
\[
AC_0 = \{0\} \cup \bigcup_{c \in \mc A} c \o \ol\Ga
\]
is a partition of $AC_0$ into disjoint sets.
\end{enumerate}
The equivalence classes are given by $[c] = \ol{c \o \Ga}$.
\end{proposition}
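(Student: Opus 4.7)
The plan is to verify that (2) visibly defines an equivalence relation (reflexivity, symmetry, transitivity all coming from equality of sets), and then to establish the two equivalences (1) $\Leftrightarrow$ (2) and (2) $\Leftrightarrow$ (3), after which the identification $[c] = \ol{c\o\Ga}$ is a one-line consequence of (2).

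The main tool I would use throughout is the standard factorisation: every $c \in AC_0(I,\R^d)$ can be written as $c = \tilde c \o \alpha$ with $\alpha \in \ol\Ga$ and $\tilde c$ a constant speed curve; concretely, one sets $\alpha(t) = \ell_c(t)/\ell_c(1)$ using arclength (and declares $\tilde c = 0$ if $c = 0$). For $c \neq 0$ this $\tilde c$ is regular in the $AC$-sense, so Cor.~\ref{cor:orbit_id}(\ref{cor_item1}) applies and gives $\ol{c \o \Ga} = \tilde c \o \ol\Ga$. For $c = 0$ one has trivially $\ol{0 \o \Ga} = \{0\}$.

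For (\ref{equiv1}) $\Rightarrow$ (\ref{equiv2}): if $b = a\o\be$ and $c = a\o\ga$, factor $a = \tilde a \o \alpha$; then $b = \tilde a \o (\alpha\o\be)$ and $c = \tilde a\o(\alpha\o\ga)$ both sit in $\tilde a\o\ol\Ga$, and Cor.~\ref{cor:orbit_id}(\ref{cor_item1}) applied to the regular $\tilde a$ yields $\ol{b\o\Ga} = \tilde a \o \ol\Ga = \ol{c\o\Ga}$. For (\ref{equiv2}) $\Rightarrow$ (\ref{equiv1}): factor $b = \tilde b \o \be$ and $c = \tilde c \o \ga$; then (\ref{equiv2}) together with Cor.~\ref{cor:orbit_id} gives $\tilde b \o \ol\Ga = \tilde c \o \ol\Ga$, and the uniqueness clause of Lem.~\ref{lem:orbit_choice} forces $\tilde b = \tilde c$ (both constant speed with coinciding orbits), so $a := \tilde b$ works. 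The zero case is handled separately: $\ol{0\o\Ga} = \{0\}$, so (\ref{equiv2}) with $b=0$ forces $c=0$ and we take $a=0$.

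For (\ref{equiv2}) $\Leftrightarrow$ (\ref{equiv3}): Lem.~\ref{lem:orbit_choice} already says any two orbit closures either coincide or are disjoint, and each nontrivial closure has the form $\tilde c\o\ol\Ga$ with $\tilde c \in \mc A$ uniquely determined by its constant speed, while the zero curve gives the singleton $\{0\}$; this is exactly the partition in (\ref{equiv3}), and conversely the partition in (\ref{equiv3}) is precisely the one induced by $\sim$ in (\ref{equiv2}). Finally, $[c] = \ol{c\o\Ga}$ follows because $b \in \ol{c\o\Ga}$ implies (via Cor.~\ref{cor:orbit_id}) $\ol{b\o\Ga} = \ol{c\o\Ga}$, while conversely $\ol{b\o\Ga} = \ol{c\o\Ga}$ gives $b = b\o\on{Id} \in \ol{b\o\Ga} = \ol{c\o\Ga}$. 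The main point of care in all of this is to apply Cor.~\ref{cor:orbit_id}(\ref{cor_item1}) only to the regular constant speed representative $\tilde c$ rather than the possibly non-regular $c$; once this factorisation trick is in place, the rest is unpacking Lem.~\ref{lem:orbit_choice}.
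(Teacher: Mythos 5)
Your proposal is correct and takes essentially the same route as the paper: both arguments reduce everything to the constant-speed factorisation together with Cor.~\ref{cor:orbit_id} and Lem.~\ref{lem:orbit_choice}, and both identify every equivalence class with $\ol{c \o \Ga}$. The only cosmetic difference is in (\ref{equiv1})$\Rightarrow$(\ref{equiv2}): the paper invokes Lem.~\ref{lem:inv_conv_ga} directly by approximating $\ga$ with elements of $\Ga$, whereas you route through the constant-speed representative of the common curve $a$ and apply Cor.~\ref{cor:orbit_id} twice, which carries the same content.
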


Property~(\ref{equiv1}) states that two curves are equivalent if they are reparametrisations of a common curve. This curve can be taken to have constant speed, leading to the alternative characterisation
\begin{enumerate}
\item[(1')]
$b \sim c \Leftrightarrow \exists a \in \mc A,\; \exists \be, \ga \in \ol\Ga:\;
b = a \o \be \text{ and } c = a \o \ga\,.$
\end{enumerate}
Because of Lem.~\ref{lem:orbit_choice}, property~(\ref{equiv2}) is also equivalent to
\begin{enumerate}
\item[(2')]
$b \sim c \Leftrightarrow \ol{b \o \Ga} \cap \ol{c \o \Ga} \neq \emptyset$.
\item[(2'')]
$b \sim c \Leftrightarrow b \in \ol{c \o \Ga}$.
\end{enumerate}
The equivalence follows from the implications (2'')$\Rightarrow$(2')$\Rightarrow$(2)$\Rightarrow$(2''). Property~(\ref{equiv2}) is used in \cite{Lahiri2015_preprint} as the definition.

\begin{proof}[Proof of Prop.~\ref{prop:eq_rel}] We will denote by $\sim_1$, $\sim_2$, $\sim_3$ the equivalence relations of (1), (2) and (3) respectively. It is clear that $\sim_1$ is symmetric and reflexive. Transitivity will follow from identifying the equivalence classes.

(1) Fix $c \in AC_0$ and assume that $b \sim_1 c$. Then $b = a\o\be$ and $c = a \o\ga$ for some $\be, \ga \in \ol\Ga$. Choose $\ga_n \in \Ga$ with $\ga_n \to \ga$. Then $c \o \ga_n\inv \o \be = a \o (\ga \o \ga_n\inv) \o \be \to b$ by Lem.~\ref{lem:inv_conv_ga} and hence $b \in \ol{c \o \Ga}$.

Conversely, if $b \in \ol{c \o\Ga}$, then $b \in \tilde c \o \ol\Ga$, where $\tilde c$ is a constant speed parametrisation of $c$, i.e. $c = \tilde c \o \tilde \ga$. Thus $b = \tilde c \o \tilde \be$ for some $\tilde \be \in \ol \Ga$ and hence $b \sim_1 c$. Thus $[c]_1 = \ol{c \o \Ga}$.

(2) Again, fix $c \in AC_0$. If $b \sim_2 c$, then clearly $b \in \ol{c \o \Ga}$. Conversely, if $b \in \ol{c \o \Ga}$, then $\ol{b \o \Ga} \cap \ol{c \o \Ga} \neq \emptyset$ and thus $\ol{b \o \Ga} =\ol{c \o \Ga}$ by Lem.~\ref{lem:orbit_choice}. By definition this means $b \sim_2 c$ and hence $[c]_2 = \ol{c\o \Ga}$.

(3) Lemma~\ref{lem:orbit_choice} shows that the sets $c \o \ol\Ga$, where $c \in \mc A$, together with $\{0\}$ form a partition of $AC_0$. Take $b \in AC_0$ and write it as $b = c\o \ga$ with $c \in AC$ the unique constant speed parametrisation. Then $[b]_3 = c\o\ol\Ga = \ol{b\o\Ga}$ by Cor.~\ref{cor:orbit_id}.
\end{proof}

\subsection{Quotient space}
\label{sec:quotient_space}
Using the equivalence relation defined in Prop.~\ref{prop:eq_rel} we introduce the quotient space of unparametrised curves
\begin{align*}
B(I,\R^d) &:= AC_0(I,\R^d) /_\sim\,,
\end{align*}
together with the canonical projection
\[
\pi : AC_0(I,\R^d) \to B(I,\R^d),\, c \mapsto  [c]\,.
\]
In the following we will use the following representation of $B(I,\R^d)$,
\[
B(I,\R^d) = \left\{ c \o \ol\Ga \,:\, c \in AC_0(I,\R^d),\, c'\neq 0\text{ a.e.}\right\}
\cup \{0\}\,.
\]
Informally $B(I,\R^d)$ is the quotient space $AC_0/\ol{\Ga}$, however since $\ol\Ga$ is not a group, we only consider orbits of curves with non-zero derivative a.e., together with the constant curve; we identify the constant curve with the orbit $0 \o \ol\Ga$. Unless stated otherwise, statements about elements $c \o \ol\Ga \in B(I,\R^d)$ assume implicitly either $c'\neq 0$ a.e. or $c\equiv 0$.

\subsection{Induced distance}
The distance
\[
\on{dist}(b, c) = \| R(b) - R(c) \|_{L^2}
\]
on $AC_0$, defined in Sect.~\ref{sec:geod_dist}, is invariant under the $\ol\Ga$-action, as can be seen from
\begin{align*}
\on{dist}(c_1 \o \ga, c_2 \o \ga)
&= \| R(c_1 \o \ga) - R(c_2 \o \ga) \|_{L^2} \\
&= \left\| \left( R(c_1) - R(c_2)\right) * \ga \right\|_{L^2} \\
&= \| R(c_1) - R(c_2) \|_{L^2} = \on{dist}(c_1,c_2)\,;
\end{align*}
here $\ga \in \ol\Ga$ and for notational convenience we used the isometric $\ol\Ga$-action on $L^2$, introduced in Rem.~\ref{rem:klassen_paper}.

On $B(I,\R^d)$ we consider the induced quotient distance
\[
\on{dist}(b \o \ol\Ga, c \o \ol\Ga) = \inf_{\be,\ga \in \ol\Ga} \on{dist}(b \o \be, c \o \ga)
= \inf_{\ga \in \Ga} \on{dist}(b, c \o \ga)\,.
\]
Note that for the second equality to hold, we need that $\Ga$ is dense in $\ol\Ga$ and that $\ol\Ga$ acts continuously with respect to $\on{dist}$. This allows us to choose a minimising sequence $(\be_n, \ga_n)$ in $\Ga$ instead of $\ol\Ga$ and use the invariance of $\on{dist}$ to write
\[
\on{dist}(b \o \be_n, c \o \ga_n)
= \on{dist}(b, c \o \ga_n \o \be_n\inv)\,.
\]
We have the following result.

\begin{lemma}
The topology induced by $\on{dist}$ on $B(I,\R^d)$ coincides with the quotient topology and $(B(I,\R^d), \on{dist})$ is a complete metric space.
\end{lemma}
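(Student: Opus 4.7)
My plan rests on the clean rewriting
\[
\on{dist}([b],[c]) \;=\; \inf_{\ga \in \Ga} \on{dist}(b, c \o \ga) \;=\; \on{dist}\bigl(b,[c]\bigr),
\]
where on the right $[c] = \ol{c \o \Ga}$ is read as a subset of $(AC_0, \on{dist})$ and $\on{dist}(\cdot,\cdot)$ as the usual point-to-set distance. The first equality is the one displayed above; the second uses only that the distance from a point to a set equals the distance to its closure. Almost everything in the lemma then follows from this identity, together with closedness of equivalence classes in $AC_0$ (Prop.~\ref{prop:orbit_closed} and Cor.~\ref{cor:orbit_id}, plus Lem.~\ref{lem:R_cont} to identify the $\|\cdot\|_{AC}$- and $\on{dist}$-topologies on $AC_0$).

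\textbf{Metric axioms.} Symmetry is immediate from the $\ol\Ga$-invariance of $\on{dist}$, and the triangle inequality is the standard combine-representatives argument applied to the double infimum. Non-degeneracy reduces to the implication $\on{dist}(b,[c]) = 0 \Rightarrow b \in [c]$, which holds since $[c]$ is closed in $AC_0$.

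\textbf{Topology and completeness.} From $\on{dist}([b],[c]) \leq \on{dist}(b,c)$ the projection $\pi$ is continuous, so the metric topology is coarser than the quotient topology. For the reverse inclusion I would show that $\pi$ is an open map. Given open $V \subseteq AC_0$ and $[c] \in \pi(V)$, choose a representative $v \in V \cap [c]$ and an $r > 0$ with $B_r(v) \subseteq V$; for any $[b] \in B_r([c])$, applying the identity at the representative $v$ gives
\[
\on{dist}\bigl(v,[b]\bigr) = \on{dist}\bigl([v],[b]\bigr) = \on{dist}\bigl([c],[b]\bigr) < r,
\]
so some $b' \in [b]$ lies in $B_r(v) \subseteq V$, forcing $[b] = [b'] \in \pi(V)$. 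Hence $B_r([c]) \subseteq \pi(V)$ and the two topologies agree. For completeness, given a Cauchy sequence $([c_n])$ in $B$, I would pass to a subsequence with $\on{dist}([c_{n+1}],[c_n]) < 2^{-n}$ and lift inductively: starting from any $b_1 \in [c_1]$, pick $b_{n+1} \in [c_{n+1}]$ with $\on{dist}(b_n,b_{n+1}) < 2^{-n}$, which is possible by the point-to-set identity. Then $(b_n)$ is Cauchy in the complete space $(AC_0, \on{dist})$ (Sect.~\ref{sec:geod_dist}), its limit $b$ defines a class $[b] \in B$, and continuity of $\pi$ gives $[c_n] \to [b]$.

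\textbf{Main obstacle.} The only step requiring genuine care is the openness of $\pi$: without the reformulation $\on{dist}([b],[c]) = \on{dist}(b,[c])$ one has to wrestle with the double infimum and pick reparametrisations of both classes simultaneously. With the identity in hand the ball-chasing argument above is essentially automatic, so I would invest most effort in justifying that rewriting — in particular the reduction $\inf_{\be,\ga \in \ol\Ga} = \inf_{\ga \in \Ga}$, which relies on density of $\Ga$ in $\ol\Ga$ together with the continuity of the $\ol\Ga$-action from Prop.~\ref{prop:action_cont}.
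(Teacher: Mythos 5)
Your proposal is correct and follows essentially the same route as the paper: both rest on the reduction of the double infimum to $\inf_{\ga\in\Ga}\on{dist}(b,c\o\ga)$ (density of $\Ga$ plus continuity of the action), closedness of the classes $\ol{c\o\Ga}$ for non-degeneracy, a ball-chasing argument for the equality of topologies, and an inductive choice of representatives together with completeness of $(AC_0,\on{dist})$ for completeness of the quotient. Your packaging of the key identity as a point-to-set distance and of the topology comparison as openness of $\pi$ is only a mild rephrasing of the paper's argument with $\ep$-balls and (constant-speed) representatives.
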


This may be unsurprising, but the proof is nontrivial. It follows closely the proof given in \cite[Lem.~6.5]{Bruveris2015}, but under slightly weaker assumptions. The analog distance under the SRVT on $L^2(I,\R^d)/\ol{\Ga}$ has been studied in \cite{Lahiri2015_preprint}.

\begin{proof}
It is clear that $\on{dist}$ is symmetric and satisfies the triangle inequality. If $\on{dist}(b \o\ol\Ga, c \o\ol\Ga) = 0$, then there exists a sequence $\ga_n \in \Ga$ with $\on{dist}(b, c \o \ga_n) \to 0$, which means $b \in \ol{c \o\Ga}$ or equivalently $b\o\ol\Ga \cap c \o\ol\Ga \neq \emptyset$. This by Lem.~\ref{lem:orbit_choice} implies $b\o\ol\Ga = c\o\ol\Ga$. Thus $\on{dist}$ is indeed a distance.

Let $O \subseteq B(I,\R^d)$ be open with respect to $\on{dist}$ and take $c \in \pi\inv(O)$. Write $c = \tilde c \o \ga$ with $\tilde c$ of constant speed. We will denote by $B(c,\ep)$ and $B(c \o \ol\Ga, \ep)$ the $\ep$-balls in $AC_0(I,\R^d)$ and $B(I,\R^d)$ respectively. Then $\pi(c) = \pi(\tilde c)$ and there exists an $\ep > 0$, such that $B(\tilde c\o\ol\Ga, \ep) \subseteq O$. We claim that $B(c,\ep) \subseteq \pi\inv(O)$. Let $b \in AC_0$ be such that $\on{dist}(b,c) < \ep$ and write $b = \tilde b \o \be$ with $\tilde b$ of constant speed. Then 
\[
\on{dist}(\tilde b \o \ol\Ga, \tilde c\o\ol\Ga) 
\leq \on{dist}(\tilde b \o\be, \tilde c \o \ga)
= \on{dist}(b,c) < \ep\,,
\]
meaning $\tilde b \o\ol\Ga \in O$ and $b \in \pi\inv(O)$. Thus $\pi\inv(O)$ is open in $AC_0$ and $O$ is open in the quotient topology.

Now let $O \subseteq B(I,\R^d)$ be open in the quotient topology, $c \o \ol\Ga \in U$ and $\ep$ such that $B(c,\ep) \subseteq \pi\inv(U)$. If $\on{dist}(b \o\ol\Ga, c \o\ol\Ga) < \ep$ for some $b$, then $\on{dist}(b\o\be, c) < \ep$ for some $\be \in \Ga$ and hence $b \o \be \in B(c,\ep)$, implying $b \o \Ga \in U$. Thus $B(c\o\Ga, \ep) \subseteq U$ and the topology induced by $\on{dist}$ coincides with the quotient topology.

Now we want to show completeness of $B(I,\R^d)$. Let $(c_n \o \ol\Ga)_{n \in \mb N}$ a Cauchy sequence. We can choose a subsequence, such that $\on{dist}(c_n \o \ol\Ga,c_{n+1}\o \ol\Ga) < 2^{-n}$ holds for all $n\in \mb N$. Next we choose representatives of the orbits with $\on{dist}(c_n,c_{n+1}) < \on{dist}(c_n \o \ol\Ga,c_{n+1}\o \ol\Ga) + 2^{-n}$. Then
\begin{align*}
\on{dist}(c_n,c_{n+k}) &\leq \sum_{i=n}^{n+k-1} \on{dist}(c_i,c_{i+1}) \\
&\leq \sum_{i=n}^{n+k-1} \on{dist}(c_i \o \ol\Ga,c_{i+1}\o \ol\Ga) + 2^{-i}
\leq 2^{2-n}(1-2^{-k})\,,
\end{align*}
showing that $(c_n)_{n \in \mb N}$ is a Cauchy sequence in $AC_0(I,\R^d)$. Let $c$ be the limit. Then $\lim c_n \o \ol\Ga = \lim \pi(c_n) = c\o\ol\Ga$ and thus $B(I,\R^d)$ is complete.
\end{proof}

\section{Existence of optimal reparametrisations}

In this section we want to answer the question, whether the infimum in 
\[
\on{dist}(b \o \ol\Ga, c \o \ol\Ga) 
= \inf_{\be,\ga \in \ol\Ga} \on{dist}(b \o \be, c \o \ga)
\]
is attained. Before stating the main result, we want to cite a theorem about upper semi continuity of functionals, that will be used in the proof.

\begin{theorem}[Thm. 1.6 in \cite{Struwe2008}]
\label{thm:upper_semicont}
Let $I$ be a compact interval and assume that $F : I \x \R^d \x \R^d \to \R$ is a continuous function and $F(t,x,\cdot)$ is concave for all $t, x$. Then, if $u_n, u \in W^{1,\infty}(I,\R^d)$ and $u_n \to u$ in $L^1(I)$, $u_n \rightharpoonup u$ weakly in $L^1(I)$ and $\| u_n' \|_{L^\infty} \leq C$ for some $C \in \R$, it follows that
\[
E(u) \geq \limsup_{n \to \infty} E(u_n)\,,
\]
where
\[
E(u) = \int_I F(t, u(t), u'(t)) \ud t\,.
\]
\end{theorem}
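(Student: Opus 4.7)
The plan is to combine Mazur's lemma with the concavity of $F$ in its third argument, which is the classical De Giorgi--Ioffe line of argument for semicontinuity of integral functionals, run in reverse (upper, not lower) because of concavity rather than convexity. I first reduce to a subsequence realising the limsup: pick $n_k$ with $E(u_{n_k}) \to L := \limsup_n E(u_n)$ and relabel, so it suffices to prove $E(u) \geq \lim_n E(u_n)$.

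Next I freeze the state slot of $F$. The bound $\|u_n'\|_{L^\infty}\leq C$ makes $(u_n)$ equi-Lipschitz, and combined with $u_n \to u$ in $L^1(I)$, Arzelà--Ascoli yields (along a further subsequence) uniform convergence $u_n \to u$ on $I$; in particular all triples $(t, u_n(t), u_n'(t))$ lie in a common compact set $K \subset I \x \R^d \x \R^d$ on which $F$ is uniformly continuous. Hence
\[
E(u_n) = \int_I F(t, u(t), u_n'(t))\ud t + o(1)\,.
\]
The working hypothesis then is weak convergence of the derivatives, $u_n' \rightharpoonup u'$ in $L^1$, which I read as the intended meaning of the second hypothesis in the statement (the one literally written, $u_n \rightharpoonup u$ in $L^1$, is already implied by $u_n \to u$ in $L^1$). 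Mazur's lemma produces convex combinations
\[
w_k = \sum_{j=k}^{N_k} \la_j^k u_j'\,,\qquad \la_j^k \geq 0\,,\quad \sum_{j=k}^{N_k}\la_j^k = 1\,,
\]
with $w_k \to u'$ in $L^1$ and, after a further subsequence, also a.e. Concavity of $F(t,x,\cdot)$ gives pointwise
\[
F(t, u(t), w_k(t)) \geq \sum_{j=k}^{N_k} \la_j^k F(t, u(t), u_j'(t))\,.
\]

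Integrating this inequality and applying the freezing step to each summand, the right-hand side is a convex combination of numbers tending to $L$, so for every $\ep>0$ it exceeds $L - \ep$ once $k$ is large. The left-hand side converges to $E(u)$ by dominated convergence, using the uniform bound $|w_k|\leq C$, the a.e.\ convergence $w_k \to u'$, and boundedness of $F$ on the relevant compact set. This yields $E(u) \geq L$. The main obstacle is reconciling the two required modes of convergence: passing to the limit in the state slot of $F$ demands strong (in fact uniform) convergence of $u_n$, supplied via Arzelà--Ascoli, whereas the concavity inequality bites only on weak convergence of the velocities $u_n'$. Without the equi-Lipschitz bound, neither the freezing step nor the application of dominated convergence to $w_k$ would go through, which is exactly where the $L^\infty$-bound hypothesis is essential.
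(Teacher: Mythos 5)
The paper does not actually prove this statement: it is quoted (as a concave variant of Struwe's Theorem~1.6, with the boundedness assumption shifted from $F$ to the sequence) and used as a black box, so there is no internal proof to compare against. Your argument is a correct, self-contained proof, and it follows the classical Tonelli-type semicontinuity scheme run in the concave direction: reduce to a subsequence realising the limsup, freeze the state slot using uniform convergence (Arzel\`a--Ascoli from the equi-Lipschitz bound) together with uniform continuity of $F$ on a fixed compact set, then apply Mazur's lemma to the derivatives and the pointwise concavity inequality, and finish with dominated convergence, the dominating constant coming again from the $L^\infty$-bound. Two small points are worth recording. First, your reading of the weak-convergence hypothesis as $u_n' \rightharpoonup u'$ in $L^1$ is the intended one, but you need not treat it as an extra assumption: since $\|u_n'\|_{L^\infty}\le C$, Banach--Alaoglu yields weak-$\ast$ convergent subsequences of $(u_n')$ in $L^\infty$, and testing against smooth functions vanishing at the endpoints identifies every such limit with $u'$, so $u_n' \rightharpoonup u'$ follows from $u_n\to u$ in $L^1$ and the derivative bound alone; this is exactly the verification carried out before the theorem is invoked in the proof of Prop.~\ref{prop:ex_opt_reparam}. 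Second, Arzel\`a--Ascoli also requires equiboundedness, which is routine here: a $C$-Lipschitz function on the compact interval $I$ satisfies $\|u_n\|_{\infty}\le \la(I)^{-1}\|u_n\|_{L^1}+C\,\la(I)$, and $\|u_n\|_{L^1}$ is bounded by the $L^1$-convergence. Compared with Struwe's proof of the convex prototype, which linearises $F$ in the gradient slot (using continuity of $F_p$) and invokes Egorov's theorem, your Mazur-lemma route needs only continuity of $F$ and concavity of $F(t,x,\cdot)$ --- precisely the hypotheses of the restated theorem --- so it in fact justifies the modified statement more directly than the bare citation does.
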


This is a version of \cite[Thm. 1.6]{Struwe2008}, rewritten for concave, instead of convex functions, with the boundedness assumption moved from $F$ to the sequence $u_n$.

Here is the main result.

\begin{proposition}
\label{prop:ex_opt_reparam}
Given $b,c \in C^1(I,\R^d)$ with $b', c' \neq 0$ a.e., there exist $\be,\ga \in \ol\Ga$, such that
\[
\on{dist}(b\o\be,c\o\ga) = \on{dist}(b \o \ol\Ga, c \o \ol\Ga)\,,
\]
i.e., the infimum in the definition of $\on{dist}(b \o \ol\Ga, c \o \ol\Ga)$ is attained.
\end{proposition}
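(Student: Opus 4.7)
The plan is to combine the $\ol\Ga$-invariance of the SRV distance with a compactness trick that forces uniform Lipschitz control on a minimising sequence, and then pass to the limit via Thm.~\ref{thm:upper_semicont}. Since $\Ga$ is dense in $\ol\Ga$ and the $\ol\Ga$-action is continuous (Prop.~\ref{prop:action_cont}), I would begin with a minimising sequence $(\be_n, \ga_n) \in \Ga \x \Ga$. A naive attempt to extract a $C^0$-limit in $\ol\Ga$ via Helly's theorem fails, because pointwise limits of elements of $\ol\Ga$ may develop jumps and leave the absolutely-continuous class.

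The key reduction is a joint reparametrisation: set $\eta_n = \tfrac12(\be_n + \ga_n) \in \Ga$, $\omega_n = \eta_n\inv \in \Ga$, and define $\tilde\be_n = \be_n \o \omega_n$, $\tilde\ga_n = \ga_n \o \omega_n$. Invariance of $\on{dist}$ under $\ol\Ga$ keeps $(\tilde\be_n, \tilde\ga_n)$ minimising, and by construction $\tilde\be_n + \tilde\ga_n = 2\,\on{Id}$; hence $\tilde\be_n' + \tilde\ga_n' = 2$ a.e., so both reparametrisations are $2$-Lipschitz. Arzelà–Ascoli then yields a subsequence with $(\tilde\be_n, \tilde\ga_n) \to (\be^*, \ga^*)$ uniformly; the limit inherits $\be^* + \ga^* = 2\,\on{Id}$, is $2$-Lipschitz (hence in $\ol\Ga$), and $\tilde\be_n' \rightharpoonup (\be^*)'$ weakly-$*$ in $L^\infty$ (similarly for $\ga$).

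To conclude, I would prove $\on{dist}(b \o \be^*, c \o \ga^*) \leq \liminf_n \on{dist}(b \o \tilde\be_n, c \o \tilde\ga_n)$; combined with the trivial reverse inequality this gives optimality of $(\be^*, \ga^*)$. Writing $p = R(b)$, $q = R(c)$, the identity $\on{dist}(b \o \be, c \o \ga)^2 = \|p\|_{L^2}^2 + \|q\|_{L^2}^2 - 2 E_2(\be, \ga)$ with $E_2(\be, \ga) = \int_I \langle p(\be), q(\ga)\rangle \sqrt{\be' \ga'}\,\ud t$ rephrases the claim as $\limsup_n E_2(\tilde\be_n, \tilde\ga_n) \leq E_2(\be^*, \ga^*)$. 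Because $b, c \in C^1$ makes $p, q$ continuous, $\langle p(\tilde\be_n), q(\tilde\ga_n)\rangle \to h^*(t) := \langle p(\be^*), q(\ga^*)\rangle$ uniformly; together with $\|\sqrt{\tilde\be_n'\tilde\ga_n'}\|_{L^1} \leq 1$ (Cauchy--Schwarz), this reduces the task to
\[
\limsup_n \int_I h^*(t) \sqrt{\tilde\be_n'\tilde\ga_n'}\,\ud t \;\leq\; \int_I h^*(t) \sqrt{(\be^*)'(\ga^*)'}\,\ud t\,,
\]
which is exactly the content of Thm.~\ref{thm:upper_semicont} applied to $F(t, y_1, y_2) = h^*(t)\sqrt{y_1 y_2}$, provided $h^*(t) \geq 0$.

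The main obstacle will be the sign of $h^*$: since $(y_1, y_2) \mapsto \sqrt{y_1 y_2}$ is concave on $[0, \infty)^2$, the integrand $F(t, \cdot)$ is concave in $(y_1, y_2)$ only on $\{h^* \geq 0\}$ and Thm.~\ref{thm:upper_semicont} fails verbatim where $h^* < 0$. I would try to circumvent this using the constraint $\tilde\be_n' + \tilde\ga_n' = 2$: on this $1$-parameter surface $\sqrt{\tilde\be_n'\tilde\ga_n'} = g(\tilde\be_n')$ with $g(y) = \sqrt{y(2-y)}$ strictly concave on $[0, 2]$, and a Young-measure argument shows that any residual oscillation of $\tilde\be_n'$ about $(\be^*)'$ can be smoothed out by a further joint reparametrisation without decreasing $E_2$; this should force strong $L^1$-convergence $\tilde\be_n' \to (\be^*)'$, after which $\sqrt{\tilde\be_n'\tilde\ga_n'} \to \sqrt{(\be^*)'(\ga^*)'}$ in $L^1$ by dominated convergence and the required semicontinuity follows regardless of the sign of $h^*$.
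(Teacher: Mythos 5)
Your Step 1 and the reduction to the cross term are essentially the paper's argument: replace the pair by a jointly reparametrised pair with $\tilde\beta_n' + \tilde\gamma_n' = 2$ a.e., extract a uniformly convergent subsequence by Arzel\`a--Ascoli, identify the weak-$\ast$ limits of the derivatives, and use continuity of $p = R(b)$, $q = R(c)$ (this is where $C^1$ enters) to feed the problem into Thm.~\ref{thm:upper_semicont}. You also correctly locate the crux: the integrand $\langle p(x_1), q(x_2)\rangle\sqrt{\xi_1\xi_2}$ is concave in $(\xi_1,\xi_2)$ only where the inner product is nonnegative, so the semicontinuity theorem does not apply directly.

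However, your resolution of that crux is wrong. The claim that oscillations of $\tilde\beta_n'$ can be ``smoothed out by a further joint reparametrisation without decreasing $E_2$'', forcing strong $L^1$-convergence of the derivatives and hence upper semicontinuity of $E_2$ ``regardless of the sign of $h^*$'', fails precisely on the set where $h^* = \langle p\circ\beta^*, q\circ\gamma^*\rangle < 0$. There a maximising sequence \emph{wants} to oscillate: alternating $\tilde\beta_n' = 2,\ \tilde\gamma_n' = 0$ and vice versa makes $\sqrt{\tilde\beta_n'\tilde\gamma_n'} \equiv 0$, so the negative contribution is killed along the sequence, while the weak-$\ast$ limit is $(\beta^*)' = (\gamma^*)' = 1$ with $\sqrt{(\beta^*)'(\gamma^*)'} = 1$, giving a strictly negative contribution in the limit. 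Smoothing such oscillation \emph{strictly decreases} $E_2$ where $h^* < 0$ (concavity of $y \mapsto \sqrt{y(2-y)}$ works against you once it is multiplied by a negative weight), strong $L^1$-convergence of the derivatives simply does not hold in general, and the limit pair $(\beta^*,\gamma^*)$ need not attain the supremum at all; the oscillation mechanism you would have to rule out is exactly the one exploited in the paper's counterexample (Prop.~\ref{prop:ex_sup}). The paper's proof therefore does \emph{not} prove semicontinuity of $E_2$ itself: it applies Thm.~\ref{thm:upper_semicont} to the truncated, genuinely concave integrand $\max\left(\langle p(x_1), q(x_2)\rangle, 0\right)\sqrt{\xi_1\xi_2}$, and then invokes Lem.~\ref{lem:remodel_pair} to surgically modify the limit pair on the open set where $\langle p\circ\beta^*, q\circ\gamma^*\rangle < 0$ (making $\tilde\beta$ and $\tilde\gamma$ move alternately there, so that $\sqrt{\tilde\beta'\tilde\gamma'} = 0$ on that set while the positive part is unchanged). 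It is this modified pair, not the weak limit, that realises the infimum. Your proposal is missing this modification step, and the argument you propose in its place is false as stated.
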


\begin{proof}
Set $p = R(b)$ and $q = R(c)$. We note that since $b,c$ are $C^1$, their transforms $p,q$ are continuous; this will be important later on. We can write the distance $\on{dist}(b \o \be, c \o \ga)^2$ in the following form,
\begin{equation}
\label{eq:inf_to_sup}
\begin{aligned}
\on{dist}(b \o \be, c \o \ga)^2
&= \int_I \left| p \o \be \sqrt{\be'} - q \o \ga \sqrt{\ga'} \right|^2 \ud t \\
&= \int_I \left| p \o \be \right|^2 \be' 
- 2 \langle p \o \be, q \o \ga \rangle \sqrt{\be'} \sqrt{\ga'}
+ \left| q \o \ga \right|^2 \ga' \ud t \\
&= \| p \|^2_{L^2} + \| q \|^2_{L^2} - 2 \int_I \langle p \o \be, q \o \ga \rangle \sqrt{\be'}\sqrt{\ga'} \ud t\,.
\end{aligned}
\end{equation}
Thus, finding the infimum of $\on{dist}(b \o \be, c \o \ga)^2$ is equivalent to finding the supremum of $\int_I \langle p \o \be, q \o \ga \rangle \sqrt{\be'}\sqrt{\ga'} \ud t$.

{\bfseries Step 1:} Constructing a weakly convergent subsequence. \newline
Take a maximising sequence $\be_n, \ga_n \in \ol\Ga$. Before we can extract a weakly convergent subsequence we have to modify it slightly.

Considering the pair $(\be_n, \ga_n)$ as an element of $AC(I,\R^2)$, we write $(\be_n, \ga_n) = (f,g) \o \ph$ with $(f,g)$ a constant speed curve and $\ph \in \ol \Ga$. We have the freedom to choose the norm in which we measure speed and we choose $(f,g)$ to have constant speed with respect to the $1$-norm on $\R^2$, i.e., $|f'(t)| + |g'(t)| = f'(t) + g'(t) \equiv C$ is a.e. constant; because of $\int_I f' \ud t = \int_I g' \ud t = 1$, the constant has to equal $C=2$. Since 
\[
\on{dist}(c\o\be_n, c\o\ga_n) 
= \on{dist}(c\o f\o\ph, c\o g \o \ph) 
= \on{dist}(c\o f, c\o g)\,,
\]
we can replace $\be_n, \ga_n$ by $f,g$ and thus assume that the minimising sequence satisfies $0 \leq \be'_n,\ga'_n \leq 2$ a.e..

The sequences $(\be_n)_{n\in\mathbb N}$, $(\ga_n)_{n\in\mathbb N}$ have uniformly bounded derivatives and are therefore uniformly Lipschitz. Using the theorem of Arzel\`{a}--Ascoli we can pass to uniformly convergent subsequences $\be_n \to \be$ and $\ga_n \to \ga$ and the limits $\be, \ga$ are again Lipschitz with $0 \leq \be',\ga' \leq 2$; in particular $\be,\ga \in \ol\Ga$.

The sequences $(\be'_n)_{n\in\mathbb N}$, $(\ga'_n)_{n\in\mathbb N}$ are bounded in $L^\infty(I,\R)$, and $L^\infty$ being the dual of $L^1$, we can use the theorem of Banach--Alaoglu to pass to weak-$\ast$ convergent subsequences $\be'_n \xrightarrow{w^\ast} \si$ and $\ga'_n \xrightarrow{w^\ast} \rh$. Let $f$ be a smooth function with $f(0)=f(1)=0$. Then
\[
\int_I \si f
= \lim_{n \to \infty} \int_I \be_n' f 
= -\lim_{n \to \infty} \int_I \be_n f'
= - \int_I \be f'
= \int_I \be' f\,.
\]
This holds for all smooth functions, that vanish at the endpoints and hence we have $\si = \be'$ and by the same argument also $\rh = \ga'$. Thus $\be'_n \xrightarrow{w^\ast} \be'$ and $\ga'_n \xrightarrow{w^\ast} \ga'$ in $L^\infty(I,\R)$.

{\bfseries Step 2:} Constructing the maximum. \newline
Define the function $F: \R^2 \x \R^2 \to \R$ by
\[
F(x_1,x_2, \xi_1,\xi_2) = \sqrt{\xi_1\xi_2} \max\left(\langle p(x_1), q(x_2) \rangle, 0 \right)\,.
\]
For $x_1,x_2$ fixed, $F(x_1,x_2, \cdot,\cdot)$ is concave. By the construction in Step 1 the sequence $(\be_n)_{n \in \mathbb N}$ converges uniformly, $\be_n \to \be$, and thus also in $L^1$. The derivatives are bounded in $L^\infty$ and converge, $\be_n' \xrightarrow{w^\ast} \be'$, weak-$\ast$ in $L^\infty$ and thus also weakly in $L^1$. The same holds for $(\ga_n)_{n \in \mathbb N}$. This allows us to apply Thm.~\ref{thm:upper_semicont} to conclude that
\begin{align*}
\int_I \max\left(\langle p \o \be, q \o \ga \rangle, 0 \right)
\sqrt{\be'\ga'} \ud t & \geq
\limsup_{n \to \infty}
\int_I \max\left(\langle p \o \be_n, q \o \ga_n \rangle, 0 \right)
\sqrt{\be_n' \ga_n'} \ud t \\
& \geq
\limsup_{n \to \infty}
\int_I \langle p \o \be_n, q \o \ga_n \rangle 
\sqrt{\be_n' \ga_n'} \ud t\,.
\end{align*}

Finally we apply Lem.~\ref{lem:remodel_pair} with the pair $\be, \ga$ to obtain a second pair $\tilde \be, \tilde \ga$. Introduce the set $A = \left\{ t \,:\, \langle p \o \be(t), q \o \ga(t)\rangle \geq 0 \right\}$. The new pair satisfies
\begin{align*}
\int_I \langle p \o \tilde \be, q \o \tilde \ga \rangle 
\sqrt{\tilde \be' \tilde\ga'} \ud t
&= \int_{A} \langle p \o \be, q \o \ga \rangle 
\sqrt{\be'\ga'} \ud t \\
&= \int_I \max\left( \langle p \o \be, q \o \ga \rangle, 0 \right)
\sqrt{\be'\ga'} \ud t \\
&\geq \limsup_{n \to \infty}
\int_I \langle p \o \be_n, q \o \ga_n \rangle 
\sqrt{\be_n'\ga_n'} \ud t\,.
\end{align*}
Thus we see that $(\tilde \be, \tilde \ga)$ realises the supremum of $\int_I \langle p\o \be, q \o \ga \rangle \sqrt{\be'}\sqrt{\ga'} \ud t$ and hence the distance $\on{dist}(b \o \ol\Ga, c \o \ol \Ga)$.
\end{proof}

Informally the lemma states that we can change a given pair of reparametrisations and by doing so eliminate the negative contributions in the integral $\int_I \langle p\o \be, q \o \ga \rangle \sqrt{\be'}\sqrt{\ga'} \ud t$. In the proof of Prop.~\ref{prop:ex_opt_reparam} we used the calculus of variations to maximise the positive contributions and this lemma tells us, that we can remove the negative ones by hand.

\begin{lemma}
\label{lem:remodel_pair}
Let $p,q\in C(I,\R^d)$ and $\be, \ga \in \ol \Ga$. Then there exist $\tilde \be, \tilde \ga \in \ol \Ga$, such that
\[
\int_I \langle p \o \tilde \be, q \o \tilde \ga \rangle 
\sqrt{\tilde \be'}\sqrt{\tilde \ga'} \ud t
= 
\int_A \langle p \o \be, q \o \ga \rangle \sqrt{\be'}\sqrt{\ga'} \ud t\,,
\]
where $A = \left\{ t\,:\, \langle p \o \be(t), q \o \ga(t) \rangle \geq 0\right\}$.
\end{lemma}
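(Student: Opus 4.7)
The plan is to observe that $A$ is closed (so $B := I \setminus A$ decomposes into open intervals), then construct $\tilde\be, \tilde\ga$ by a ``zigzag'' modification on each component of $B$ that makes the product $\tilde\be' \tilde\ga'$ vanish a.e.\ on $B$ while leaving $\tilde\be = \be$ and $\tilde\ga = \ga$ on $A$. The integral identity then follows by splitting $I = A \sqcup B$.

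\emph{Step 1 (structure of $A$).} Since $\be, \ga \in \ol\Ga$ are absolutely continuous and $p, q$ are continuous, the function $t \mapsto \langle p \o \be(t), q \o \ga(t)\rangle$ is continuous on $I$, so $A$ is closed and $B$ is a countable disjoint union of relatively open intervals $B = \bigsqcup_k J_k$ with endpoints $a_k \le b_k$ (allowing $a_k = 0$ or $b_k = 1$).

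\emph{Step 2 (zigzag construction).} Writing $m_k = (a_k+b_k)/2$, define $\tilde\be, \tilde\ga : I \to I$ by integrating the nonnegative $L^1$ functions
\[
\tilde\be' := \be' \one_A + \sum_k \tfrac{2(\be(b_k) - \be(a_k))}{b_k - a_k} \one_{(m_k, b_k)},
\qquad
\tilde\ga' := \ga' \one_A + \sum_k \tfrac{2(\ga(b_k) - \ga(a_k))}{b_k - a_k} \one_{(a_k, m_k)}.
\]
Since $\be$ is absolutely continuous, $\sum_k (\be(b_k) - \be(a_k)) = \int_B \be'$, so $\| \tilde\be' \|_{L^1} = \int_I \be' = 1$ and therefore $\tilde\be(1) = 1$; the same for $\tilde\ga$. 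Thus $\tilde\be, \tilde\ga \in \ol\Ga$. A direct computation, splitting $\int_0^t \tilde\be'$ over $A$ and $B$, shows that $\tilde\be = \be$ and $\tilde\ga = \ga$ on $A$, and on each $J_k$, $\tilde\be$ stays at $\be(a_k)$ on $[a_k, m_k]$ and rises linearly to $\be(b_k)$ on $[m_k, b_k]$, with $\tilde\ga$ doing the opposite.

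\emph{Step 3 (integral).} Split $I = A \sqcup B$. On $A$, $(\tilde\be, \tilde\ga) = (\be, \ga)$ and, a.e., $(\tilde\be', \tilde\ga') = (\be', \ga')$, so the $A$-contribution to the left-hand side equals $\int_A \langle p \o \be, q \o \ga \rangle \sqrt{\be' \ga'} \ud t$, i.e., the right-hand side. On each $J_k$, $\tilde\be' = 0$ a.e.\ on $(a_k, m_k)$ and $\tilde\ga' = 0$ a.e.\ on $(m_k, b_k)$, so $\sqrt{\tilde\be' \tilde\ga'} = 0$ a.e.\ on $J_k$, contributing zero. Summing gives the stated equality.

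\emph{Main obstacle.} The only delicate point is verifying $\tilde\be, \tilde\ga \in \ol\Ga$ when $B$ has infinitely many components; this is circumvented by defining $\tilde\be, \tilde\ga$ directly via their $L^1$ derivatives (so absolute continuity is automatic), and the identity $\sum_k (\be(b_k) - \be(a_k)) = \int_B \be'$ ensures the endpoints and piecewise shape match the intended zigzag description.
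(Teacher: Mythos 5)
Your proof is correct and follows essentially the same strategy as the paper: decompose the open set where $\langle p\o\be, q\o\ga\rangle<0$ into countably many intervals and, on each, freeze one reparametrisation on one half while advancing the other on the other half, so that $\sqrt{\tilde\be'\tilde\ga'}$ vanishes there while $\tilde\be,\tilde\ga$ agree with $\be,\ga$ on $A$. The only (harmless) variations are that you advance with constant speed rather than with a double-speed copy of $\be'$, $\ga'$, and that you define $\tilde\be,\tilde\ga$ by integrating their prescribed $L^1$-derivatives, which indeed handles absolute continuity in the countable-component case cleanly.
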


\begin{proof}
Since $p,q$ are continuous, the set $B = \left\{ t\,:\, \langle p \o \be(t), q \o \ga(t) \rangle < 0\right\}$ is open and thus can be written as an at most countable union, $B = \bigcup_{n} I_n$ of disjoint open intervals, $I_n = [t_n^-, t_n^+]$. We define the new parametrisations $\tilde \be$, $\tilde \ga$ as follows: we set $\tilde \be|_A = \be|_A$, $\tilde \ga|_A = \ga|_A$; to define them on $B$ we split each interval into $I_n = I_n^- \cup I_n^+$ with $I_n^- = [t_n^-, \frac 12 (t_n^- + t_n^+)]$ and $I_n^+ = [\frac 12 (t_n^- + t_n^+), t_n^+]$ and set
\begin{align*}
\tilde \be' &= \begin{cases}
2\be'(2t - t_n^-) & t \in I_n^- \\
0 & t \in I_n^+
\end{cases} &
\tilde \ga' &= \begin{cases}
0 & t \in I_n^- \\
2\ga'(2t - t_n^+) & t \in I_n^+
\end{cases}\,.
\end{align*}
When integrating $\tilde \be'$ we choose $\tilde \be(t_n^-) = \be(t_n^-)$ as the constant of integration and this choice leads to $\tilde \be(t_n^+) = \be(t_n^+)$. Thus $\tilde \be$ is again absolutely continuous. Furthermore we have the property $\sqrt{\tilde \be'}\sqrt{\tilde \ga'} = 0$ on $I_n$ and hence also on $B$. Together we obtain
\[
\int_I \langle p \o \tilde \be, q \o \tilde \ga \rangle 
\sqrt{\tilde \be'}\sqrt{\tilde \ga'} \ud t
=
\int_A \langle p \o \tilde \be, q \o \tilde \ga \rangle 
\sqrt{\tilde \be'}\sqrt{\tilde \ga'} \ud t
= 
\int_A \langle p \o \be, q \o \ga \rangle \sqrt{\be'}\sqrt{\ga'} \ud t\,,
\]
as required.
\end{proof}

\subsection{Counterexample}
In this example we will construct a pair of Lipschitz curves in the plane, for which no optimal re\-pa\-ra\-met\-ri\-sa\-tions exist. This shows that some additional assumption on the regularity of the curves -- for example $C^1$ as in Prop.~\ref{prop:ex_opt_reparam} -- is necessary for the existence of optimal reparametrizations.

Let $B\subset I$ be a modified Cantor set with the following properties: $B$ is closed and nowhere dense and $\la(B) = \frac 12$ with $\la$ denoting the Lebesgue measure; see \cite[Ex. I.1.7.6]{Bogachev2007} for the construction of $B$. Setting $A = I \setminus B$, we have that $A$ is open and dense and $\la(A) = \la(B) = \frac 12$.
 
We choose a curve $v_1(t) \in \R^2$ and vectors $v_2, v_3 \in \R^2$ as follows
\begin{align*}
v_1 &= \begin{pmatrix} 
\cos \ep t  \\
\sin \ep t 
\end{pmatrix} &
v_2 &= \begin{pmatrix}
-\frac 12 \\
\frac{\sqrt{3}}2
\end{pmatrix} &
v_3 &= \begin{pmatrix}
-\frac 12 \\
-\frac{\sqrt{3}}2
\end{pmatrix}\,,
\end{align*}
with $\ep < \frac 1 6$ a small number. The three vectors $v_1(t), v_2, v_3$ have the property that all mixed scalar products are negative, $\langle v_1(t), v_2\rangle < 0$, $\langle v_2, v_3\rangle < 0$ and $\langle v_1(t), v_3\rangle < 0$. We define the two curves
\begin{align*}
p(t) &= v_1(t) \one_A(t) + v_2 \one_B(t) \\
q(t) &= v_1(t) \one_A(t) + v_3 \one_B(t)\,.
\end{align*}
We have $p, q \in L^\infty(I,\R^2)$ and thus their preimages $b=R\inv(p)$, $c = R\inv(q)$ are well-defined Lipschitz curves, hence also absolutely continuous. We claim that the infimum $\inf_{\be, \ga \in \ol\Ga} \on{dist}(b \o \be, c \o \ga)$ is not attained. Because of
\[
\on{dist}(b \o \be, c \o \ga)^2
= \| p \|^2_{L^2} + \| q \|^2_{L^2} - 2 \int_I \langle p \o \be, q \o \ga \rangle \sqrt{\be'}\sqrt{\ga'} \ud t
\]
it is enough to look at the supremum over $\int_I \langle p \o \be, q \o \ga \rangle \sqrt{\be'}\sqrt{\ga'} \ud t$ and the next proposition shows that this supremum is not attained.

\begin{proposition}
\label{prop:ex_sup}
With $p,q$ constructed as above we have
\begin{equation}
\label{eq:int_sup}
\sup_{\be, \ga \in \ol \Ga} \int_I \langle p \o \be, q \o \ga \rangle \sqrt{\be'} \sqrt{\ga'} \ud t = \la(A)\,,
\end{equation}
and the supremum is not attained.
\end{proposition}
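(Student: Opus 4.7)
Plan. The proof splits into three parts: an upper bound on the supremum, a matching construction, and the non-attainment argument. The first two are short; the main obstacle is the third.

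For the upper bound, I use that $|p|=|q|\equiv 1$ and that $\langle p(x),q(y)\rangle$ is bounded by $1$ on $A\x A$ but strictly negative on each of $A\x B$, $B\x A$ and $B\x B$ (which is precisely what the choice of $v_1, v_2, v_3$ arranges). Setting $E = \be\inv(A)\cap\ga\inv(A)$, I discard the nonpositive contribution from $I\setminus E$, apply $\sqrt{\be'\ga'}\leq \tfrac 12(\be'+\ga')$, and invoke the change-of-variables identity $\int_{\be\inv(V)}\be'\,\ud t = \la(V)$, valid for every $\be\in\ol\Ga$ (it is just $\be_\ast(\be'\,\ud t)=\la$). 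With $V=A$ the resulting chain gives the upper bound $\la(A)$.

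For the construction, given $\ep>0$ I take a finite disjoint union $K_\ep\subseteq A$ of closed subintervals with $\la(K_\ep) > \la(A)-\ep$, set $\be_\ep=\ga_\ep=\on{id}$ on $K_\ep$, and on each complementary interval $(c,d)\subseteq I\setminus K_\ep$ let $\be_\ep$ increase linearly from $c$ to $d$ on the first half and stay constant on the second, while $\ga_\ep$ does the opposite. Then $\be_\ep,\ga_\ep\in\ol\Ga$ and $\be_\ep'\ga_\ep'=0$ outside $K_\ep$, so the integral equals $\la(K_\ep)>\la(A)-\ep$.

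For non-attainment, suppose $(\be,\ga)\in\ol\Ga\x\ol\Ga$ realises the supremum. Equality throughout the chain forces four conditions a.e.: (a) $\be'\ga'=0$ on $I\setminus E$, from strict negativity of the inner product there; (b) $\be'=\ga'$ on $E$, from AM--GM; (c) $\be=\ga$ wherever $\be'\ga'>0$, since $\langle p\o\be,q\o\ga\rangle=1$ means $\cos(\ep(\be-\ga))=1$; and (d) $\be'=0$ on $\be\inv(A)\cap\ga\inv(B)$ together with the symmetric statement for $\ga$, from tightness in the change of variables. If $\be\equiv\ga$, then change of variables directly gives the integral $\la(A)+\langle v_2,v_3\rangle\la(B)=\tfrac 14\neq\tfrac 12$, a contradiction. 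Otherwise choose a component $(c,d)$ of the open set $\{\be\neq\ga\}$; WLOG $\be>\ga$ on $(c,d)$, and by continuity $\be(c)=\ga(c)=x_0$ and $\be(d)=\ga(d)=x_1>x_0$. A routine combination of (a)--(d) shows $\be'=0$ on $(c,d)\setminus\be\inv(B)$, so
\[
x_1-x_0 = \int_{(c,d)\cap\be\inv(B)}\be'\,\ud t \leq \int_{\be\inv(B\cap[x_0,x_1])}\be'\,\ud t = \la(B\cap[x_0,x_1])
\]
by the same change of variables. But $B$ is nowhere dense, so $A\cap(x_0,x_1)$ is a non-empty open set of positive Lebesgue measure, giving $\la(B\cap[x_0,x_1])<x_1-x_0$, a contradiction.

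The main obstacle is combining (a)--(d) on a component of $\{\be\neq\ga\}$ to conclude that $\be'$ contributes only through values of $\be$ in $B$; once this is established, the nowhere-density of $B$ closes the argument.
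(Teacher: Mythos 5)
Your proof is correct, and the heart of it --- the non-attainment argument --- takes a genuinely different route from the paper. For the upper bound the paper decomposes $M=\be\inv(A)\cap\ga\inv(A)$ into intervals and applies Cauchy--Schwarz twice, comparing $\la(\be(M)),\la(\ga(M))$ with $\la(A)$; your AM--GM estimate $\sqrt{\be'\ga'}\le\tfrac12(\be'+\ga')$ combined with the pushforward identity $\int_{\be\inv(V)}\be'\ud t=\la(V)$ is shorter and, importantly, hands you cleaner equality conditions. For non-attainment the paper first normalises the pair to constant speed ($\be'+\ga'=2$), extracts $\be|_M=\ga|_M$ from the Cauchy--Schwarz equality case, and then runs a topological argument (closedness of $\be$, density of $A$, hence density of $M$) to force $\be=\ga=\on{Id}$ and reach a contradiction on $B$. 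You avoid the normalisation entirely: your conditions (a)--(d) are exactly the a.e.\ equality cases of your chain, and the dichotomy is settled either by the direct change-of-variables computation $\la(A)+\langle v_2,v_3\rangle\la(B)=\tfrac14$ when $\be\equiv\ga$, or, on a component $(c,d)$ of $\{\be\neq\ga\}$, by showing all of $\be$'s increase is carried by $\be\inv(B)$, which contradicts $\la(B\cap[x_0,x_1])<x_1-x_0$ coming from nowhere-density of $B$. This local, measure-theoretic argument is arguably more robust than the paper's global rigidity argument, at the price of the small bookkeeping in (a)--(d); I checked that the "routine combination" does work ($\be'\ga'=0$ a.e.\ on $(c,d)$ by (c), then (b) kills $\be'$ on $E\cap(c,d)$ and (d) kills it on $\be\inv(A)\cap\ga\inv(B)$), and the only step you should spell out is $x_1>x_0$: if $x_1=x_0$ both maps would be constant, hence equal, on $(c,d)$, contradicting the choice of the component. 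Your Step 3 construction (inner approximation of $A$ by finitely many closed intervals plus the zig-zag) is the mirror image of the paper's outer approximation of $B$ by open sets and is equally valid.
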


\begin{proof}
{\bfseries Step 1:} $\sup \leq \la(A)$. \newline
Because all mixed scalar products among $v_1(t), v_2, v_3$ are negative, we have the simple estimates
\begin{align*}
\int_I \langle p \o \be, q \o \ga \rangle \sqrt{\be'} \sqrt{\ga'} \ud t &\leq
\int_{\be\inv(A) \cap \ga\inv(A)} \langle v_1 \o \be, v_1 \o \ga \rangle \sqrt{\be'} \sqrt{\ga'} \ud t \\
&= \int_{\be\inv(A) \cap \ga\inv(A)} \cos \ep  (\ga(t) - \be(t)) \sqrt{\be'} \sqrt{\ga'} \ud t \\
&\leq \int_{\be\inv(A) \cap \ga\inv(A)} \sqrt{\be'} \sqrt{\ga'} \ud t \,.
\end{align*}
We set $M = \be\inv(A) \cap \ga\inv(A)$ and since $M$ is an open set, we can write $M = \bigcup_j I_j$ as a union of countably many disjoint open intervals $I_j$. Using the inequality of Cauchy--Schwartz a couple of times we obtain
\begin{align*}
\int_M \sqrt{\be'} \sqrt{\ga'} \ud t 
&= \sum_j \int_{I_j} \sqrt{\be'} \sqrt{\ga'} \ud t \\
&\leq \sum_j \sqrt{\int_{I_j} \be' \ud t}
\sqrt{\int_{I_j} \ga' \ud t}
= \sum_j \sqrt{\la(\be(I_j))} \sqrt{\la(\ga(I_j))} \\
&\leq \sqrt{\sum_j \la(\be(I_j))} \sqrt{\sum_j \la(\ga(I_j))}
= \sqrt{\la(\be(M))} \sqrt{\la(\ga(M))}
\leq \la(A)\,.
\end{align*}
In particular we see that the upper bound
\[
\int_I \langle p \o \be, q \o \ga \rangle \sqrt{\be'} \sqrt{\ga'} \ud t \leq \la(A)
\]
holds for all reparametrisations $\be, \ga$.

{\bfseries Step 2:} $\la(A)$ is not attained. \newline
Let $\be, \ga \in \ol \Ga$ be a pair of reparametrisations. As in the proof of Prop.~\ref{prop:ex_opt_reparam} we can replace the curve $(\be, \ga) \in AC(I,\R^2)$ by its $L^1$-constant speed parametrisation allowing us to assume that $\be'+\ga'=2$ holds a.e..

By following the estimates made in Step 1, we see that a necessary condition for the equality
\begin{equation}
\label{eq:simple_sup}
\int_M \langle v_1 \o \be, v_1 \o \ga \rangle \sqrt{\be'} \sqrt{\ga'} \ud t = \la(A)
\end{equation}
to hold is $\be' = r \ga'$ on $M$ for some $r \in \R$. Together with $\be'+\ga'=2$ this implies that $\sqrt{\be'}\sqrt{\ga'} \neq 0$ a.e. on $M$ and thus $\cos \ep (\ga(t) - \be(t)) = 1$ a.e. on $M$. Hence $\be|_M = \ga|_M$. Since $\be$ is a closed map we have $\be(\ol M) \supseteq \ol{\be(M)} = \ol A = I$. Hence $\be(\ol M) = I$ and because $\be|_M = \ga|_M$, also $\ga(\ol M) = I$. 

Assume $M$ is not dense in $I$. Then $\ol M^c$ contains an open interval $O$ and because $\be$ is weakly increasing and $\be(\ol M) = I$, $\be$ must be constant on $O$, in particular $\be'|_O = 0$. The same holds for $\ga$, $\ga'|_O =0$, but this contradicts the assumption $\be' + \ga' = 2$ a.e.; hence $\ol M = I$.

By continuity, $\be|_M = \ga|_M$ implies $\be = \be|_{\ol M} = \ga|_{\ol M} = \ga$ and hence $\be' = \ga'$ a.e.. Because of $\be' + \ga' = 2$ we actually have $\be' = \ga' = 1$ and thus $\be(t) = t$ and $\ga(t) = t$. The scalar products $\langle v_i, v_j \rangle$ were chosen to be negative for $i \neq j$, therefore the equality 
\[
\int_I \langle p \o \be, q \o \ga \rangle \sqrt{\be'} \sqrt{\ga'} \ud t
= \int_M \langle p \o \be, q \o \ga \rangle \sqrt{\be'} \sqrt{\ga'} \ud t
\]
can only hold if $\sqrt{\be'}\sqrt{\ga'} = 0$ on $I \setminus M$. Together with $\be' = \ga' = 1$, this leads to a contradiction. Hence the value $\la(A)$ cannot be attained.

{\bfseries Step 3:} $\sup \geq \la(A)$. \newline
We will construct a sequence of reparametrisations, such that the integral in~\eqref{eq:int_sup} will converge to $\la(A)$.

Since $B$ is measurable, there exists a sequence of open sets, $O_n \supseteq B$, such that $\la(O_n \setminus B) \to 0$. Decompose $O_n = \bigcup_k I_{n,k}$ into at most countably many open intervals. Each interval $I_{n,k}$ shall be divided into two subintervals of equal size, $I_{n,k} = I_{n,k}^- \cup I_{n,k}^+$. We define the reparametrisations $\be_n, \ga_n$ by setting $\be_n(t) = \ga_n(t) = t$ for $t \in O_n^c$. On $O_n$ we define
\begin{align*}
\be_n'&=\begin{cases}
2 & \text{on }I_{n,k}^- \\
0 & \text{on }I_{n,k}^+
\end{cases} &
\ga_n'&=\begin{cases}
0 & \text{on }I_{n,k}^- \\
2 & \text{on }I_{n,k}^+
\end{cases}\,.
\end{align*}
This has the effect that $\be_n, \ga_n$ are continuous and $\sqrt{\be_n'} \sqrt{\ga_n'} = 0$ on $O_n$.

Now we look at the integral. Since $B \subseteq O_n$, it follows that $O_n^c \subseteq A$. Thus
\[
\int_I \langle p \o \be, q \o \ga \rangle \sqrt{\be'} \sqrt{\ga'} \ud t = 
\int_{O_n^c} \langle p \o \be, q \o \ga \rangle \sqrt{\be'} \sqrt{\ga'} \ud t = \la(O_n^c)\,.
\]
Comparing to the desired value $\la(A)$ we see that
\[
\la(A) - \la(O_n^c) = \la(A) - \la(A \cap O_n^c)  = \la(A \cap O_n) = \la(O_n \cap B^c) = \la(O_n \setminus B) \to 0\,,
\]
meaning that we can approximate $\la(A)$ by a sequence of reparametrisations. Thus $\sup \geq \la(A)$, which concludes the proof.
\end{proof}

We summarise the counterexample in the following corollary.

\begin{corollary}
\label{cor:count_ex}
Let $d \geq 2$. Then there exist two curves, $b, c \in W^{1,\infty}(I, \R^d)$, such that the infimum
\[
\inf_{\be, \ga \in \ol \Ga} \on{dist}(b \o \be, c \o \ga)
\]
is not attained.
\end{corollary}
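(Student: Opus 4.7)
The plan is to deduce the corollary directly from Proposition \ref{prop:ex_sup}, which already furnishes $p, q \in L^\infty(I,\R^2)$ for which the supremum of $\int_I \langle p\o\be, q\o\ga\rangle \sqrt{\be'}\sqrt{\ga'}\ud t$ over $\be,\ga \in \ol\Ga$ equals $\la(A)$ but is not attained. The work of the corollary is really just translating that statement from the SRVF side back to the curve side and then lifting from $d=2$ to $d \geq 2$.

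First I would handle $d=2$. Set $b = R\inv(p)$ and $c = R\inv(q)$. Since $p, q \in L^\infty(I,\R^2)$, the integrands $p|p|$ and $q|q|$ defining $R\inv$ lie in $L^\infty$, so $b', c' \in L^\infty$ and hence $b, c \in W^{1,\infty}(I,\R^2)$. Exactly as in equation~\eqref{eq:inf_to_sup}, for any $\be, \ga \in \ol\Ga$ we have
\[
\on{dist}(b\o\be, c\o\ga)^2 = \|p\|_{L^2}^2 + \|q\|_{L^2}^2 - 2\int_I \langle p\o\be, q\o\ga\rangle \sqrt{\be'}\sqrt{\ga'}\ud t,
\]
so minimising the left-hand side over pairs $(\be,\ga)$ is equivalent to maximising the integral on the right-hand side. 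By Proposition \ref{prop:ex_sup} the latter supremum equals $\la(A)$ but is attained by no pair $(\be, \ga)$; consequently the infimum $\inf_{\be,\ga\in\ol\Ga} \on{dist}(b\o\be, c\o\ga) = \sqrt{\|p\|_{L^2}^2 + \|q\|_{L^2}^2 - 2\la(A)}$ is not attained either.

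For $d > 2$ I would simply pad with zeros. Let $\iota : \R^2 \hookrightarrow \R^d$ be the isometric embedding onto the first two coordinates, and set $\tilde b = \iota \o b$, $\tilde c = \iota \o c$. Since $\iota$ is linear, $\tilde b, \tilde c \in W^{1,\infty}(I,\R^d)$, and because $\iota$ preserves inner products we obtain $R(\tilde b \o \be) = \iota \o R(b\o\be)$ and likewise for $c$, so $\on{dist}(\tilde b\o\be, \tilde c\o\ga) = \on{dist}(b\o\be, c\o\ga)$ for every $\be, \ga \in \ol\Ga$. Therefore the infimum for the pair $\tilde b, \tilde c$ in $\R^d$ coincides with the infimum for $b, c$ in $\R^2$, and is attained by some pair in $\ol\Ga$ in the former case if and only if it is attained in the latter; by the previous paragraph this is not the case.

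There is no real obstacle here, since all the analytic content is already in Proposition \ref{prop:ex_sup}; the only point requiring a moment's attention is the verification that $R\inv(p)$ and $R\inv(q)$ are genuinely Lipschitz (rather than merely absolutely continuous) so that they lie in $W^{1,\infty}$ as required by the statement, which follows at once from $p, q \in L^\infty$.
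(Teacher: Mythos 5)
Your proposal is correct and follows essentially the same route as the paper: the corollary is exactly the translation of Prop.~\ref{prop:ex_sup} back to the curves $b = R\inv(p)$, $c = R\inv(q)$ via the identity~\eqref{eq:inf_to_sup}, which is how the paper itself summarises the counterexample. The only addition is your explicit zero-padding argument for $d > 2$, which the paper leaves implicit and which you verify correctly using the linearity of the embedding and the equivariance $R(\iota \o b) = \iota \o R(b)$.
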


We would like to contrast this result to \cite[Thm.~4]{Lahiri2015_preprint}, which states that if one curve is piecewise linear then the other curve only has to be absolutely continuous for the infimum to be attained. We do not know, if we can strengthen the counterexample to make one curve $C^1$ while the other one remains Lipschitz. The above construction cannot be immediately generalised to scalar functions and thus the case $d=1$ remains open.

\begin{openquestion*}
Does there exist a pair of scalar functions $b, c \in AC_0(I,\R)$, such that the infimum 
$\inf_{\be, \ga \in \ol \Ga} \on{dist}(b \o \be, c \o \ga)$ is not attained?
\end{openquestion*}

Both the proof of existence of optimal reparametrisations as well as the construction of the counterexample relied heavily on the availability of an explicit formula for the geodesic distance. Such a formula is not available for closed curves and hence the case of periodic functions remains open.

\begin{openquestion*}
Can Prop.~\ref{prop:ex_opt_reparam} and Cor.~\ref{cor:count_ex} be generalised to the metric completion of the geodesic distance on the space $AC_0(S^1,\R^d)$ of closed curves?
\end{openquestion*}

\subsection*{Acknowledgements}
I would like to thank Martin Bauer, Philipp Harms, Eric Klassen, Stephen Marsland and Peter W. Michor for their valuable comments and helpful discussions.

\bibliographystyle{siamplain}

\end{document}